\documentclass[preprint]{elsarticle}

\usepackage{amssymb,amsmath}
\usepackage{amsthm}
\usepackage{ifthen}
\usepackage{tikz}
\usepackage{framed}
\usepackage{subfig}
\usepackage{graphicx}
\usepackage{times}

%-------------------------------------------------------
%\usepackage{theorem}
%\usepackage{calc}
%\usepackage{epsfig}
%\usepackage{graphicx}
%\usepackage{latexsym}
%\usepackage{array}
%\usepackage{mdwmath}
%\usepackage{mdwtab}

%-------------------------------------------------------
\newtheorem{theorem}{Theorem}[section]
\renewcommand{\sharp}{\#}
%Extension of the file logic.tex from OUP%
\theoremstyle{plain}
\newtheorem{Definition}{Definition}

\theoremstyle{break}

\newtheorem{proposition}[theorem]{Proposition}
\newtheorem{problem}[theorem]{Problem}
\newtheorem{lem}[theorem]{Lemma}

\newtheorem{examples}[theorem]{Examples}
%---------------------

\newtheorem{remark}[Definition]{Remark}

\newtheorem{corollary}[Definition]{Corollary}

%-----------------------------------------------------

%-----------------------------------------------------

 %NEWSLIDE

% CUSTOMIZING NUMBERED LISTS

% THEOREMLIKE ENVIRONMENTS

%\newtheorem{definitions}[definition]{Definitions}

%MATH MODE ABBREVIATIONS
\newcommand{\N}{{\mathbb N}}

\newcommand{\F}{{\mathbf F}}

\newcommand{\Z}{{\mathbb Z}}

\newcommand{\C}{{\mathbb C}}
\newcommand{\Q}{{\mathbb Q}}
\newcommand{\bP}{{\mathbf P}}

\newcommand{\bNP}{{\mathbf{NP}}}

\newcommand{\MSOL}{{\mathrm{MSOL}}}

\newcommand{\SOL}{{\mathrm{SOL}}}

\newcommand{\Zilber}{\mathbf{CP}}

%------------------------------------------
%\newcommand{\WFF}{\mathbf{WFF}}
%\newcommand{\FPL}{\mathbf{FPL}}
%\newcommand{\SOL}{\mathbf{SOL}}
%\newcommand{\MSOL}{\mathbf{MSOL}}
%\newcommand{\CMSOL}{\mathbf{CMSOL}}
%\newcommand{\SEN}{\mbox{\bf SEN}}
%\newcommand{\WFTF}{\mbox{\bf WFTF}}
%\newcommand{\FOL}{\mbox{\bf FOL}}
%\newcommand{\TFOF}{\mbox{\bf TFOF}}

\newif\ifskip
\skiptrue
%\newif\ifsubmit
%\submittrue
%\submitfalse
\newif\ifrevision
%\revisiontrue
\revisionfalse
\newif\ifrevised
\revisedtrue
%--------------------title-------------------
\begin{document}
\begin{frontmatter}
\title{On the Complexity of \\ Generalized Chromatic Polynomials}
%--------------------authors------------------------------

\author[ag]{A.~Goodall\fnref{fn2}}
\ead{andrew@iuuk.mff.cuni.cz}
\ead[url]{http://kam.mff.cuni.cz/~andrew}

\author[mh]{M.~Hermann\fnref{fn1}}
\ead{Miki.Hermann@lix.polytechnique.fr}
\ead[url]{http://www.lix.polytechnique.fr/Labo/Miki.Hermann}

\author[tk]{T.~Kotek\fnref{fn4}}
\ead{kotek@forsyte.at}
\ead[url]{http://forsyte.at/~kotek/}

\author[jam]{J.A.~Makowsky\corref{cor1}\fnref{fn1}}
\ead{janos@cs.technion.ac.il}
\ead[url]{http://www.cs.technion.ac.il/~janos}

\author[sn]{S.D.~Noble\fnref{fn2}}
\ead{s.noble@bbk.ac.uk}
\ead[url]{http://www.bbk.ac.uk/ems/faculty/steven-noble}
%---
\cortext[cor1]{Corresponding author}
\fntext[fn1]{Work done in part while the authors were visiting the Simons Institute
for the Theory of Computing in Spring 2016.}
\fntext[fn2]{Supported by the Heilbronn Institute for Mathematical Research, Bristol, UK.}
\fntext[fn4]{Supported by the Austrian National Research Network S11403-
N23 (RiSE) of the Austrian Science Fund (FWF).}
%---------
\address[ag]{IUUK, MFF, Charles University, Prague, Czech Republic}
\address[mh]{LIX (CNRS, UMR 7161), \'Ecole Polytechnique,  91128 Palaiseau, France}
\address[tk]{Technische Universit\"at Wien, Institut f\"ur Informationssysteme, 1040 Wien, Austria}
\address[jam]{Department of Computer Science, Technion--IIT, 32000 Haifa, Israel}
\address[sn]{Department of Economics, Mathematics and Statistics, Birkbeck, University of London, London,
\\United Kingdom}
%------------------------------------------------
\begin{abstract}
%--------------------
\ifrevision
{\color{red} Last revised :  January 12, 2017}
\else \fi %revision
%------------------
\ 

\noindent
%\small
J. Makowsky and B. Zilber (2004) showed
that many variations of graph colorings, called $\Zilber$-colorings in the sequel,
give rise to graph polynomials. This is true in particular for 
harmonious colorings,
convex colorings, 
$mcc_t$-colorings, 
and rainbow colorings, and many more.
N. Linial (1986) showed
that the chromatic polynomial
$\chi(G;X)$
is $\sharp \bP$-hard to evaluate for all but three values $X=0,1,2$, where evaluation is in $\bP$.

This dichotomy includes evaluation at real or complex values, 
and has the further property that the set of points for which evaluation is in {\bf P} is finite.
%-------------------------------------
We investigate how the complexity of evaluating univariate graph polynomials 
that arise from $\Zilber$-colorings varies for different evaluation points. 
We show that for some $\Zilber$-colorings (harmonious, convex) the complexity of evaluation follows a similar pattern
to the chromatic polynomial. However, in other cases (proper edge colorings, $mcc_t$-colorings, $H$-free colorings) 
we could only obtain a dichotomy for evaluations at non-negative integer points.
We also discuss some $\Zilber$-colorings where we only have very partial results.
%----------------------------------------------
\end{abstract}
\begin{keyword}
Graph polynomials \sep Counting Complexity \sep Chromatic Polynomial\sep 
\end{keyword}
%\end{keyworkd}

%\maketitle
\end{frontmatter}
%------------------------------------------------
%\newpage
\tableofcontents
\newpage
%--------------------------------
\ifrevision
\begin{framed}
File: REV-intro.tex
\\
{\color{red} Last revised and expanded: January 12, 2017}
\end{framed}
\else \fi %revision
\section{Introduction}
\label{se:intro}
By a classical result of R. Ladner, and its generalization by K. Ambos-Spies, 
\cite{ar:ladner75,ar:Ambos-Spies87}, 
there are infinitely many degrees (via polynomial time reducibility) between $\bP$ and $\bNP$,
and between $\bP$ and $\sharp\bP$, provided $\bP \neq \bNP$.
In contrast to this, the complexity of evaluating partition 
functions or counting graph homomorphisms
satisfies a dichotomy theorem: either 
evaluation is in $\bP$ or it is $\sharp\bP$-complete,
\cite{ar:DyerGreenhill2000,ar:BulatovGrohe2005,ar:CaiChenLu2013}.
For the definition of the complexity class $\#\bP$,
%see~\cite{garey:comp+intract} or~\cite{papadimitriou}.
see \cite{bk:GJ} or \cite{bk:papadimitriou94}.

In accordance with the literature in graph theory
a finite graph $G =(V(G), E(G))$ with $n(G) = |V(G)|$ and $e(G)= |E(G)|$  has {\em order} $n(G)$ and {\em size} $e(G)$.
Otherwise, the {\em size of a finite set} is its cardinality.

In this paper we study the {\em complexity  of the evaluation} of generalized univariate chromatic polynomials,
as introduced in  \cite{ar:MakowskyZilber2006} and further studied in \cite{ar:KotekMakowskyZilber08,ar:KotekMakowskyZilber11}.
They will be called in the sequel $\Zilber$-colorings 
(for {\bf C}ounting {\bf P}olynomials).  
Among these we find:
\begin{examples}
\label{gp-list}
\begin{enumerate}[(i)]
\item
Trivial (unrestricted) vertex colorings using at most $k$ colors are just functions $V(G) \rightarrow [k]$.
We denote by $\chi_{trivial}(G; k)$ the number of trivial colorings of $G$, hence
$\chi_{trivial}(G; k) = k^{|V(G)|} \in \Z[k]$.
\item
Proper vertex colorings using at most $k$ colors, where two neighboring vertices receive different colors, 
are counted by $\chi(G;k)$, the classical chromatic polynomial .
\item
Proper edge colorings using at most $k$ colors, where two edges with a common vertex receive different colors, 
are counted by $\chi_{edge}(G;k)$, the edge chromatic polynomial. 
We note that they are exactly the proper vertex colorings of the
line graph $L(G)$ of $G$.
\item
Convex colorings using at most $k$ colors are vertex colorings, which are not necessarily proper, but where each color class
induces a connected subgraph. 
They are counted by $\chi_{convex}(G;k)$. 
Convex colorings are first introduced in \cite{ar:MS07}.
\item
Harmonious colorings using at most $k$ colors are proper vertex colorings such that no two edges 
have end-vertices receiving the same pair of colors.
They were introduced in \cite{ar:HopcroftKrishnamoorthy83,ar:EdwardsMcDiarmid95,ar:Edwards97}.
We denote the number of harmonious colorings using at most $k$ colors by 
$\chi_{harm}(G;k)$. The graph parameter 
$\chi_{harm}(G;X)$ is a polynomial in $k$ by \cite{ar:MakowskyZilber2006,ar:KotekMakowskyZilber11}
which was further studied more recently in \cite{ar:DrgasGibek2017}.
\item
For a fixed connected graph $H$,
$DU(H)$-colorings  are vertex colorings, where each color class induces a disjoint collection of copies of $H$.
The graph parameter counting the number of $DU(H)$-colorings with at most $k$ colors
is a polynomial in $k$, and the corresponding graph polynomial is denoted by $\chi_{DU(H)}(G;k)$.
\item
For a fixed $t \in \N^+$, an
$mcc_t$-coloring using at most $k$ colors is a vertex coloring, where the connected components of the subgraphs induced 
by each color class have at most $t$ vertices.
They were previously studied in \cite{ar:ADV03} and \cite{ar:LMST07a}.
The graph parameter  $\chi_{mcc_t}(G;k)$
counting the number of $mcc_t$-colorings with at most $k$ colors
is also a polynomial in $k$ but not in $t$.
\item
For a fixed graph $H$, an $H$-free coloring using at most $k$ colors is a vertex coloring in which every color class 
induces an $H$-free graph.
For $H=K_2$ these are the proper  vertex colorings.
The graph parameter  $\chi_{H-free}(G;k)$
counting the number of $H$free colorings with at most $k$ colors
is also a polynomial in $k$.
\end{enumerate}
\end{examples}
More examples are presented in Section \ref{se:chromatic},
where we also discuss a general theorem
which allows us to find infinitely many generalized chromatic polynomials,
and in Section \ref{se:varia}. 

\subsection{The complexity spectrum}
Let $\F$ be a fixed field that contains $\Q$, the rational numbers, and in which the arithmetic operations are polynomial
time computable.
For our discussion we use the unit-cost\footnote{
If instead we use the binary cost model for computations in, say, $\Q$, the main results still hold, but have to
be formulated more carefully, as $a \in \Q$ could be very large, and the notion of uniformity would be affected.
} model for the field computations in $\F$.

Given a graph polynomial $P(G;X) \in \F[X]$ and an element $a \in \F$, we view $P_a(G)= P(G;a)$ as a graph parameter.
We will look at the complexity of the problem of evaluating $P(G;a)$ for  a fixed $a \in \F$ and at the problem
of computing all the coefficients of $P(G;X)$.
\begin{framed}
\begin{trivlist}\item[]
\textbf{Problem:} $P(G;a)$\\
\textbf{Input:} Graph $G$.\\
\textbf{Output:} The value of $P(G;X)$ for $X=a$.
\end{trivlist}
\end{framed}
\begin{framed}
\begin{trivlist}\item[]
\textbf{Problem:} $P(G;X)$\\
\textbf{Input:} Graph $G$.\\
\textbf{Output:} All the coefficients of $P(G;X)$ as a vector over $\F$.
\end{trivlist}
\end{framed}
%---------------------------
We denote by $T_{P_a}(n)$ the time needed to compute $P(G;a)$ on graphs with $n$ vertices in the Turing model of computation.
Similarly $T_{P_X}(n)$ denotes the time needed to compute all the coefficients of $P(G;X)$.
Clearly, for every $a \in \F$ the problem  $P(G;a)$ is reducible to computing the coefficients of $P(G;X)$.
The converse is not true in general, but we shall see cases where for certain $a_0 \in \F$ computing the coefficients
of $P(G;X)$ is reducible to $P(G;a_0)$. 
When we speak informally of the {\em complexity spectrum of $P(G;X)$} 
we have in mind the variability of $T_{P_a}(n)$ where $a \in \F$,
without giving the term a precise definition.
For a graph polynomial $P(G;X)$,
we are interested in describing the complexity of $P_a(G)$ for all $a \in \F$.
A more modest task would be to describe it only for $a \in \N$. In the case of $a \in \N$ we speak of
the {\em discrete complexity spectrum}, in the case of $a \in \F$ we speak 
of the {\em full complexity spectrum}, 
if the context requires it. 

We define
$$
\mathrm{EASY}(P)= \{ a \in \F: \mbox{ there exists } d \in \N \mbox{ with } T_{P_a}(n) \leq n^{d} \mbox{ for all } n \geq 2 \}
$$
Analogously, we define
%$$\mathrm{NPHARD}(P)= \{ a \in \F: P_a(G) \mbox{  is } \bNP-\mbox{hard}  \}$$ and
$$\mathrm{\sharp PHARD}(P)= \{ a \in \F: P_a(G) \mbox{  is } \sharp\bP\mbox{-hard}  \}.$$

\subsection{Easy computation of the polynomial versus its easy evaluation}
%\begin{remark}
In the definition of $\mathrm{EASY}(P)$, we require that the evaluation at 
$a \in \F$ takes polynomial time,
but the exponent $d$ may depend on $a$, which is to say 
the polynomial bound for evaluating $P(G;a)$ is {\em non-uniform}.

If we could compute all of the coefficients of $P(G;X)$ in polynomial time,
we could also evaluate $P(G;a)$ for every $a$ in polynomial time $O(n^d)$ where
$d$ is independent of $a$, and $P(G;a)$ can be evaluated in polynomial time uniformly.

However, $\mathrm{EASY}(P)$ is infinite with a non-uniform polynomial bound, 
then it does not follow that there exists $d$ such that for every $G$ the coefficients of $P(G;X)$ 
maybe computed in time $O(n^d)$.

\begin{proposition}
\begin{enumerate}[(i)]
\item
If
the coefficients of 
the polynomial $P(G;X)$ can be computed in polynomial time from $G$ alone, 
then $\mathrm{EASY}(P) = \F$.
\item
If, in addition to (i), there is $a \in \F$
such that $a \in \mathrm{\sharp PHARD}(P)$, then $\bP = \sharp\bP$.
\end{enumerate}
\end{proposition}
\begin{proof}
As all the coefficients of $P(G;X)$ can be computed together in polynomial time,
the degree $\delta= \delta(G)$ of $P(G;X)$ is polynomial in the number of vertices of $G$, 
and so is the size of the coefficients.

Evaluating such a polynomial can be done in polynomial time in the order of $G$.
%Evaluating a polynomial of degree $\delta$ can be done in polynomial time in the order of the graph $G$.
Therefore if $a \in \mathrm{EASY}(P) \cap \mathrm{\sharp PHARD}(P)$ we have $\bP = \sharp\bP$.
\end{proof}

The characteristic polynomial $p_A(G;X)$ of a graph $G$ is the characteristic polynomial 
of its adjacency matrix.
A variant of this, $p_L(G;X)$, is obtained by replacing the adjacency 
matrix by the Laplacian of $G$.
Both are obtained by computing a determinant, therefore evaluating both $p_A(G;X)$ and $p_L(G;X)$
can be done in time $O(n^3)$, irrespective of the evaluation 
point $X=a \in \F$. Hence $\mathrm{EASY}(p_A(G;X))= \F$ ($\mathrm{EASY}(p_L(G;X))= \F$) uniformly.

In Section \ref{se:harmonious} we shall see an example where $\mathrm{EASY}(P) = \N$ non-uniformly.

\subsection{Linial's Trick}
In the case of the chromatic polynomial $\chi(G;X)$, N. Linial set the paradigm in the
following theorem:
\begin{theorem}[\cite{ar:Linial86}]
\label{th:linial}
$\mathrm{\sharp PHARD}(\chi)= \F - \{0,1,2\}$ and $\mathrm{EASY}(\chi) = \{0,1,2\}$.
\end{theorem}

To show this N. Linial
observed the following:
\begin{lem}[Linial's Trick]
\label{LTrick}
Let $G_1 \bowtie G_2$ be the join of the graphs $G_1$ and $G_2$, obtained from the disjoint union $G_1 \sqcup G_2$
by connecting all the vertices of $G_1$ with all the vertices of $G_2$.
\begin{enumerate}[(i)]
\item
%Let $V(G)= \{v_1, \ldots, v_n\}$ and $V(K_1)= \{u\}$.
A function  $f: V(G \bowtie K_1) \rightarrow [k]$ is a proper coloring  with $k$ colors 
if and only if
there is a function $g: V(G) \rightarrow \{1, \ldots, i-1, i+1, \ldots, k\}$
which is a proper coloring of $G$ with $k-1$ colors
and $f|_{V(G)}=g$, i.e., $g$ is the restriction of $f$ to $V(G)$, and $f(u)=i$.
\item
$ \chi(G \bowtie K_1;k) =  k \cdot \chi(G; k-1) $
\item
$ \chi(G \bowtie K_n;k) =  k_{(n)} \cdot \chi(G; k-n) $
where 
$k_{(n)}$ is the falling factorial:\\
$k_{(n)} = k (k-1)(k-2)\ldots (k- (n-1)) = \frac{k!}{(n-k)!}$.
\end{enumerate}
\end{lem}

This allows one for $a\not\in\N$ to evaluate $\chi(G,a-n)$ by computing $\chi(G\bowtie K_n, a)$. 
%This allows one to evaluate $\chi(G; a-i)$ by computing $\chi(G \bowtie K_i, a)$ for $a \not \in \N$.
It also shows that evaluating $\chi(G; k)$ is reducible to evaluating $\chi(G; k+1)$ for $k \geq 3$.
\begin{proof}[Proof of Theorem \ref{th:linial}]
First one proves that $\chi(G;3)$ is $\sharp\bP$-complete directly, as in \cite{ar:Linial86}.
For the cases $a \in \N - \{0,1,2\}$ one uses (ii) of Linial's trick iteratively to 
reduce the computation of $\chi(G; 3)$ to the computation of $\chi(G; a)$.
For the cases $a \in \F - \N$  one uses (iii) of Linial's trick to compute 
$\chi(G;a-n)$ for $n=0,1,\dots, n(G)$. 
By Lagrange interpolation the polynomial $\chi(G;X)$ is thereby determined since the degree of $\chi(G;X)$ is $n(G)$. 
In particular computing $\chi(G;3)$ is polynomial time reducible to computing $\chi(G;a)$. 
Hence, Theorem \ref{th:linial} follows.
%We call a reduction of this form {\em Linial's Trick}.
\end{proof}

Similarly, for the generating matching polynomial 
$$gm(G;X) = \sum_{M \subseteq E(G)} X^{|M|},$$
we have, see
\cite{ar:AverbouchMakowsky07}\footnote{
It appears that this was known as folklore, but we could not find a suitable reference.}.

\begin{proposition}
\label{prop:matching}
$\mathrm{\sharp PHARD}(gm) = \F -\{0\}$ and $\mathrm{EASY}(gm)= \{0\}$, 
\end{proposition}
%\newpage
\small
\begin{center}
\begin{table}[htb]
\begin{tabular}{|l | l| l| l | l|}
\hline
%\vspace{0.1cm}
&&&& \\
G-polynomial & $E=\mathrm{EASY}(P)$ & $\mathrm{\sharp PHARD}(P)$ & $\mathrm{OTHER}$ & Reference\\
&&&&\\
\hline
\vspace{0.05cm}
$\chi_{trivial}(G;X)$ & $E_{trivial}= \F$, u& $\emptyset$ & $\emptyset$ & trivial\\
%\hline
\vspace{0.05cm}
$p_A(G;X)$ & $E_{char}= \F$, u& $\emptyset$ & $\emptyset$ & folklore\\
%\hline
\vspace{0.05cm}
$gm(G;X)$ & $E_{match}= \{0\}$  & $\F- E_{match}$ & $\emptyset$ &  folklore \\
%\cite{ar:AverbouchMakowsky07}\\
%\hline
\vspace{0.05cm}
$\chi(G;X)$ & $E_{chrom}= \{0,1,2\}$ & $\F - E_{chrom}$ & $\emptyset$ & Theorem \ref{th:linial}\\
%\hline
\vspace{0.05cm}
$\chi_{harm}(G;X)$ & $E_{harm}= \N$, nu& $\F - E_{harm}$ & $\emptyset$ & Theorem \ref{th:harmonious}\\
%\hline
\vspace{0.05cm}
$\chi_{convex}(G;X)$ & $E_{convex}= \{0,1\}$ & $\F - E_{convex}$ & $\emptyset$ & Theorem \ref{th:convex}\\
%\hline
\vspace{0.05cm}
$\chi_{DU(K_{\alpha})}(G;X)$ & $E_{DU(K_{\alpha})}= \{0,1\}$ & $\F - E_{DU(K_{\alpha})}$ & $\emptyset$ & Theorem \ref{th:cliquealpha}\\
$\alpha \geq 2$ &  & &  & \\
%\hline
\hline
\end{tabular}
\caption{Full complexity spectra, u=uniformly, nu=non-uniformly}
\label{table-1}
\end{table}
\end{center}

\begin{center}
\begin{table}[htb]
\begin{tabular}{|l | l| l| l | l|}
\hline
%\vspace{0.1cm}
&&&& \\
G-polynomial & $E=\mathrm{EASY}(P)$ & $\mathrm{\sharp PHARD}(P)$ & $\mathrm{OTHER}$ & Reference\\
&&&&\\
\hline
\vspace{0.05cm}
$\chi_{edge}(G;X)$ & $E_{edge}= \{0,1\}$ & $\N - E_{edge}$ & $\emptyset$ & Theorem \ref{th:edgecoloring}\\
%\hline
\vspace{0.05cm}
$\chi_{mcc_t}(G;X)$ & $E_{mcc_t}= \{0,1\}$ & $\N - E_{mcc_2}$ & $\emptyset$ & Theorem \ref{th:mcc-new}\\
$t \geq 2, k \geq 2$ &  & &  & \\
\vspace{0.05cm}
$\chi_{H-free}(G;X)$ & $E_{H-free}= \{0,1\}$ & $\N - \{0,1,2\}$ &  $\{2\}$ (+) & Theorem \ref{th:h-free}\\
\hline
\end{tabular}
\caption{Discrete complexity spectra only, $H$ of size $2$, (+) only $\bNP$-hard is known}
\label{table-2}
\end{table}
\end{center}

\ifskip
\else
\begin{center}
\begin{tabular}[pos]{|l | l| l| l |l|}
\hline
&&&&\\
$P$ & $\mathrm{EASY}(P)$ & $\mathrm{NPHARD}(P)$ & $\mathrm{\sharp PHARD}(P)$ & $\mathrm{OTHER}$\\
\hline
\hline
&&&&\\
$\chi(G;X)$ & $E_1= \{0,1,2\}$ & $\F - E_1$ & $\F - E_1$ & $\emptyset$\\
\hline
\end{tabular}
\end{center}
\normalsize
\fi %skip
\normalsize

The purpose of this paper is to study the complexity spectrum of 
generalized univariate chromatic polynomials
arising from $\Zilber$-colorings.

In the examples we study, the complexity spectrum is easily described with the two sets
$\mathrm{EASY}(P)$
and
$\mathrm{\sharp PHARD}(P)$.
Our results on full complexity spectra are summarized in Table \ref{table-1}.
Cases where only the discrete spectrum is understood are given in Table \ref{table-2}.

To get a complete description of the full complexity spectrum of $P(G;a)$, one needs two ingredients:
\begin{enumerate}[(i)]
\item
Enough points $a \in \N$ for which the complexity of $P_a(G)= P(G;a)$ is known, and
\item
some form of reducibility between $P_a(G)$ and $P_b(G)$ for the remaining values $a,b \in \F$.
\end{enumerate}
From the literature we often, but not always, can get enough information for (i).
We give here new results for (i), namely Theorems \ref{th:goodall-noble}, \ref{th:DU-hard} and \ref{th:mcc_2}.
For (ii) we try to adapt Linial's Trick, which in some cases is more or less straightforward, while in other cases
requires finding a new gadget as in Theorem \ref{th:harmonious}.
%In one case,
%however, it turns out to work in
%some highly non-trivial version, see 
%Theorem \ref{th:mcc-trick}.
A precise description of what is needed for (ii) is given in \cite{ar:BlaeserDellMakowsky10}, which also
covers the case for multivariate graph polynomials.

\subsection{The Difficult Point Dichotomy} 
We say that a univariate graph polynomial has the {\em Difficult Point Dichotomy}
if 
\begin{enumerate}[(i)]
\item
for every $a \in \F$ either $a \in \mathrm{EASY}(P)$ or $a \in \mathrm{\sharp PHARD}(P)$, and
\item
$\mathrm{EASY}(P) = \F$ or $\mathrm{EASY}(P) \subseteq \N$.
\end{enumerate}

In \cite{ar:MakowskyZoo,pr:MakowskyKotekRavve2013} it is conjectured that, for every univariate graph polynomial 
$P(G;X)$ definable in Second Order Logic $\SOL$, the set $\mathrm{EASY}(P)$ is either finite or $\mathrm{EASY}(P) =\F$
and that $\mathrm{EASY}(P) \cup \mathrm{\sharp PHARD}(P) =\F$.
In \cite{ar:MakowskyZoo} the same was also conjectured for univariate graph polynomials definable 
in Monadic Second Order Logic $\MSOL$.
The example of  $\chi_{harm}(G;X)$ is $\SOL$-definable and therefore disproves the conjecture for
$\SOL$-definable graph polynomials. However, it was shown in \cite{ar:KotekMakowsky-LMCS2014} that $\chi_{harm}(G;X)$ is not
$\MSOL$-definable. 
%More on $\SOL$- and $\MSOL$-definability can be found in Section \ref{se:sol}.
Rather than conjecturing frivolously, we state some problems.

\begin{problem}
\label{prob-1}
Which univariate graph polynomials $P(G;X)$ satisfy the Difficult Point Dichotomy? 

In particular,
\begin{enumerate}[(i)]
%\item
%Is it true for all $\SOL$-definable univariate graph polynomial $P(G;X)$?
\item
Is it true for every $\MSOL$-definable univariate graph polynomial $P(G;X)$?
\item
Can one find a criterion which applies to an infinite family of univariate graph polynomials $P(G;X)$
which are not partition functions, or which do not count homomorphisms,
and which implies the Difficult Point Dichotomy.
%\item
%Is there a {\em meta-theorem} to be formulated?
\end{enumerate}
\end{problem}

%--------------------------------------------------------------------------------
\ifskip
\else
\begin{problem}
Prove or disprove:
Every $\SOL$-definable univariate graph polynomial $P(G;X)$ 
with $\mathrm{EASY}(P) \neq \F$ satisfies the following:
\begin{enumerate}[(i)]
\item
$\mathrm{EASY}(P) = \N$  or $\mathrm{EASY}(P)$ is a finite subset of $\N$, and
\item
$\mathrm{EASY}(P) \cup \mathrm{\sharp PHARD}(P) =\F$.
\end{enumerate}
\end{problem}

\begin{problem}
\abel{prob-2}
Prove or disprove:
Every $\MSOL$-definable univariate graph polynomial $P(G;X)$ 
with $\mathrm{EASY}(P) \neq \F$ satisfies the following:
\begin{enumerate}[(i)]
\item
$\mathrm{EASY}(P)$ is a finite subset of $\N$, and
\item
$\mathrm{EASY}(P) \cup \mathrm{\sharp PHARD}(P) =\F$.
\end{enumerate}
\end{problem}
\begin{framed}
JAM: I am not sure about the formulation of the problems.
\end{framed}
\fi %skip

\subsection*{Outline of the paper}
In Section \ref{se:chromatic} we give a simplified proof of the result from \cite{ar:MakowskyZilber2006}
that not only counting proper graph colorings,
but counting many other graph colorings, give rise to  infinitely many generalized chromatic graph polynomials. 
We give many explicit examples, and show that there are infinitely many such graph polynomials which are
mutually semantically incomparable, cf. \cite{ar:MakowskyRavveBlanchard2014,ar:KotekMakowskyRavve2017}. 
In Sections 
\ref{se:explicit} 
and
\ref{se:explicit-1} 
we analyze in detail the graph polynomials from Table \ref{table-1} and \ref{table-2}.
%---
In Section \ref{se:GN} we give the proof that counting convex colorings with $2$ colors is $\sharp\bP$-complete
(Theorem \ref{th:goodall-noble}).
%----
In Section \ref{se:varia} we discuss graph polynomials for which we have only partial results.
Finally, in Section \ref{se:conclu} we summarize our conclusions and list some open problems.

%-------------------------------------------------------------------------------------

%--------------------------------
\ifrevision
%\newpage
\begin{framed}
File: REV-onetwo.tex
\\
{\color{red} Last revised and expanded: November 22, 2016}
\end{framed}
\else \fi %revision
\section{One, two, many chromatic polynomials}
\label{se:chromatic}
Let $G=(V(G),E(G))$ be a finite graph and $k \in \N^+$ a positive integer. 
We denote the set $\{1, \ldots , k\}$ by $[k]$.
Unless otherwise stated all graphs are simple, i.e., loop-free and without multiple edges.
A {\em vertex (edge) coloring $f$} with $k$  colors is a function
$f: V(G) \rightarrow [k]$
($f: E(G) \rightarrow [k]$).
The coloring $f$ is {\em proper} if no two vertices (edges) with a common edge (vertex) have the same color.

Let 
$\chi(G;k)$
($\chi_{edge}(G;k)$)
denote the number of {\em proper} vertex (edge) colorings of $G$ with $k$ colors.
In 1912 G. Birkhoff \cite{ar:Birkhoff1912} noticed that $\chi(G;k)$
and $\chi_{edge}(G;k)$ are polynomials in $\Z[k]$, and therefore can be extended to
polynomials in $\C[X]$, 
denoted,
by abuse of notation, 
by $\chi(G;X)$ and $\chi_{edge}(G;X)$.
Birkhoff's proof for $\chi(G;X)$ was based on a recurrence relation involving deletion and contraction of edges,
which was generalized and led, in its most general form, to the Tutte polynomial.
For $\chi_{edge}(G;X)$ one simply observes that 
\begin{gather}
\label{linegraph}
\chi(L(G);X) = \chi_{edge}(G;X), 
\tag{*}
\end{gather}
where $L(G)$ is the line graph of $G$.
Although proper edge colorings have been studied in the literature, the polynomial $\chi_{edge}(G;X)$ has not received
wide attention, probably because of (\ref{linegraph}).

\subsection{Many chromatic polynomials}
Let $\mathcal{G}$ denote the class of all finite graphs.
We introduce our concepts for vertex colorings, but they can be straightforwardly extended to
edge colorings.

Two vertex colorings $f_1, f_2:V(G) \rightarrow [k]$ are isomorphic 
if there
is an automorphism $\alpha: V(G) \rightarrow V(G)$ of $G$ and a permutation $\pi: [k] \rightarrow [k]$ such that
for all $v \in V(G)$
$$
\pi(f_1(v)) = f_2(\alpha(v)).
$$
Let 
$\mathrm{COL}=  \bigcup_{G \in \mathcal{G}} \bigcup_{k \in \N^+} [k]^{V(G)}$.
A {\em coloring property $\Phi$} is a subset of $\mathrm{COL}$ that is closed under isomorphisms
of colorings.

For a fixed coloring property $\Phi$, a graph $G \in \mathcal{G}$, $k \in \N^+$ and $I \subseteq [k]$,
let 
$$
\chi_{\Phi}(G;k) = |\{f \in \Phi: f: V(G) \rightarrow [k] \}|
$$
and let
$$
c_G^{\Phi}(I,k)
%\chi_{\Phi}^=G(I,k) 
= |\{f \in \Phi: f: V(G) \rightarrow [k] \mbox{   with   } f(V(G))=I \}| 
$$
be the number of colorings $f \in \Phi$ of $G$ which use exactly the colors in $I$.

\ifrevised
We say that $\Phi$ is a {\em $\Zilber$-Property} if the following two conditions are satisfied:
\begin{description}
\item[(A)]
For all $k \in \N^+$ and $I,J \subseteq [k]$  with $|I| =|J| =i$ 
we have
$$c^{\Phi}_G(I,k) = c^{\Phi}_G(J,k).$$
\item[(B)]
For all $k, k' \in \N^+$ with $I \subseteq [k]\cap [k']$ we have
$c^{\Phi}_G(I,k) = c^{\Phi}_G(I,k')$.
\end{description}
If (A) holds we let
$c^{\Phi}_G(i,k)$ 
denote the common value of
$c^{\Phi}_G(I,k)$, where $|I|=i$, and if both (A) and (B) hold, we let
$c^{\Phi}_G(i)$
denote the common value of
$c^{\Phi}_G(i,k)$, for $k \geq i$. 
\else
We say that $\Phi$ is a {\em $\Zilber$-Property} if the following is satisfied:
\begin{description}
\item[(A)]
For all $k \in \N^+$ and $I,J \subseteq [k]$  with $|I| =|J| =i$ 
we have
$$c^{\Phi}_G(I,k) = c^{\Phi}_G(J,k)$$
and define  $$c^{\Phi}_G(i,k) = c^{\Phi}_G(I,k).$$
\item[(B)]
For all $k, k' \in \N^+$ with $I \subseteq [k]\cap [k']$ we have
$c^{\Phi}_G(I,k) = c^{\Phi}_G(I,k')$.
\\
In the presence of (A) we put
 $c^{\Phi}_G(i,k) = c^{\Phi}_G(i)$
\end{description}
\fi %revised

We now compute
$\chi_{\Phi}(G;k)$ using (A) and (B):
$$
\chi_{\Phi}(G;k) = \sum_{I \subseteq [k]} c^{\Phi}_G(I,k) =  \sum_i c^{\Phi}_G(i) {k \choose i}.
$$
This establishes the following result, first shown in [MZ06]:
\begin{theorem}[\cite{ar:MakowskyZilber2006}]
If $\Phi$ is a $\Zilber$-property, the counting function $\chi_{\Phi}(G;k)$ is a polynomial in $\Z[k]$.
\end{theorem}

\begin{examples}
\begin{enumerate}[(i)]
\item
In the case of the chromatic polynomial, both (A) and (B) are satisfied. Hence we get a new proof
of Birkhoff's Theorem.
\item
Let
$\hat{\chi}_{\Phi}(G;k)= c^{\Phi}_G(k)$
be the graph parameter that counts the number of colorings in $\Phi$ of $G$ which use exactly $k$ colors.

Note that the function $\hat{\chi}_{\Phi}(G;k)$ need not be a polynomial in $k$
when $\chi_{\Phi}(G;k)$ is a polynomial in $k$.
%Note that,  even
%if $\Phi$ is a $\Zilber$-property, 
%$\hat{\chi}_{\Phi}(G;k)$ is not necessarily a polynomial in $k$, although $\chi_{\Phi}(G;k)$ is.
\item
Let $\Phi_1$ be the coloring property which says $f$ is a proper coloring  with $k$ colors where all the $k$ 
colors are used. Here (B) is violated, and indeed, $\chi_{\Phi_1}(G;k)$ is not a polynomial.
\item
Let $\Phi_2$  be the coloring property which says $f$ is a proper coloring  with $k$ colors such that
$f(v) =i+1$ if and only if the degree of $v$ is $i$. Here (A) is violated, but (B) is still true, 
and $\chi_{\Phi_2}(G;k)$ is still a polynomial.
\item
All the examples (i)-(vii) of Section \ref{se:intro} listed in Examples \ref{gp-list} satisfy (A) and (B).
Hence they are polynomials in $k$.
\end{enumerate}
\end{examples}

\subsection{$\mathcal{P}$-colorings and variations}
\label{subse:P-colorings}

Two graph polynomials may be compared via their {\em distinctive power}.
Two graphs $G_1$ and $G_2$  are  {\em similar}
if they have
the same number of vertices, edges and connected components.
A graph polynomial $Q(G; X)$ is {\em less distinctive than} $P(G;Y)$, written $Q \preceq P$, 
if for every two similar graphs $G_1$ and $G_2$ 
\begin{gather}
P(G_1;X) =P(G_2;X) \mbox{ implies }Q(G_1;Y) =Q(G_2;Y).
\notag
\end{gather}
We also say that $P(G;X)$ {\em determines $Q(G;X)$} if $Q \preceq P$.
Two graph polynomials $P(G;X)$ and $Q(G;Y)$  are 
equivalent in distinctive power (d.p.-equivalent) 
if for every two similar graphs $G_1$ and $G_2$ 
\begin{gather}
P(G_1;X) =P(G_2;X) \mbox{ iff } Q(G_1;Y) =Q(G_2;Y).
\notag
\end{gather}
Here we show how to obtain infinitely many graph polynomials that 
are mutually incomparable in distinctive power.

Let $\mathcal{P}$ be any graph property (a class of finite graphs closed under graph isomorphism).

A function $f: V(G) \rightarrow [k]$ is a {\em $\mathcal{P}$-coloring}
if for every $i \in [k]$ the set $f^{-1}(i)$ induces a graph $G[f^{-1}(i)] \in \mathcal{P}$.
Clearly, this is a $\Zilber$-coloring for any graph property $\mathcal{P}$. Hence
$\chi_{\mathcal{P}}(G;k)$, the number of $\mathcal{P}$-colorings of $G$ with at most $k$ colors,
is a polynomial in $k$.

\ifrevised
\begin{theorem}[\cite{ar:KotekMakowskyRavve2017}] 
\label{th:dp}
There are infinitely many graph polynomials of the form
$\chi_{\mathcal{P}}(G;k)$
with mutually incomparable distinctive power.
\end{theorem}
\else
\begin{theorem}[\cite{ar:KotekMakowskyRavve2017}] 
\label{th:dp}
For two graph properties $\mathcal{P}$ and $\mathcal{Q}$,
the two graph polynomials
$\chi_{\mathcal{P}}(G;k)$
and
$\chi_{\mathcal{Q}}(G;k)$
are d.p.-equivalent iff for every graph $G$ we have either
$$G \in \mathcal{P}  \mbox{  iff  } G \in \mathcal{Q}$$ 
or
$$G \in \mathcal{P}  \mbox{  iff  } G \not\in \mathcal{Q}$$ 
\end{theorem}
This gives {\em uncountably many} graph polynomials with different distinctive power.
\fi %revised

We can generalize this further.
Let $\mathcal{P}_1$ and $\mathcal{P}_2$ be two graph properties.
The $\mathcal{P}_1$-colorings such that the union of any two color classes induces a graph in $\mathcal{P}_2$
form also a $\Zilber$-property.
Let 
\ifrevised
\begin{gather}
\chi_{\mathcal{P}_1, \mathcal{P}_2}(G;k) = \notag \\
|\{f:V(G)\to [k]: \forall_{i\in [k]} G[f^{-1}(\{i\})]\in \mathcal P_1,
\forall_{i,j\in [k], i \neq j} G[f^{-1}(\{i,j\})]\in \mathcal P_2\}|
\notag
\end{gather}
\else
\begin{gather}
\chi_{\mathcal{P}_1, \mathcal{P}_2}(G;k) = \notag \\
|\{f:V(G)\to [k]: \hspace{0.3cm} \forall_{i\in [k]}\hspace{0.2cm} G[f^{-1}(\{i\})]\in 
\mathcal P_1,\quad\forall_{i,j\in [k], i \neq j}\hspace{0.2cm} G[f^{-1}(\{i,j\})]\in \mathcal P_2\}|
\notag
\end{gather}
\fi %revised
denote the number of such colorings.
Then, 
for $k\in\mathbb N$ and a graph $G$, 
the graph invariant $\chi_{\mathcal{P}_1, \mathcal{P}_2}(G;k)$
is a polynomial in $k$.

\begin{problem}
\label{prob-3}
\label{prob:F1F2} 
For which graph properties
$\mathcal{P}_1, \mathcal{P}_2$  can we describe the complexity of
$\chi_{\mathcal{P}_1, \mathcal{P}_2}(G;k)$? In particular, for which graph properties does 
the Difficult Point Dichotomy hold? 
\end{problem}

\begin{remark}
In Problem
\ref{prob:F1F2} 
it may be reasonable to impose some complexity restrictions on  $\mathcal{P}_1$ and $\mathcal{P}_2$,
e.g., we might require them to be in $\bNP$.
\end{remark}

Let $\mathcal{AH}$ be
any additive induced hereditary property 
(closed under taking induced subgraphs and disjoint unions).
$\mathcal{AH}$-colorings have been studied in \cite{ar:Brown1996,ar:Farrugia04},
in which the following is shown:
\begin{theorem}
\begin{enumerate}[(i)]
\item
(\cite{ar:Brown1996})
There are uncountably many induced hereditary properties $\mathcal{AH}$ of graphs.
Therefore, $\chi_{\mathcal{AH}}(G; k)$ may not be computable.
\item
(\cite{ar:Farrugia04})
$\chi_{\mathcal{AH}}(G; k)$ is $\bNP$-hard, unless $\mathcal{AH}$ is the class of empty (=edgeless) graphs.
\end{enumerate}
\end{theorem}

Table~\ref{table-zilber} unifies the $\Zilber$-colorings 
considered in Table \ref{table-1} 
as $\mathcal{P}_1$-colorings such that the union of any two color classes induces a graph in
$\mathcal{P}_2$.  
Table~\ref{table-zilber}  also contains the definitions of 
{\em acyclic colorings, $t$-improper colorings} and {\em co-colorings},
which will be discussed in Section \ref{se:varia}.
\begin{table}[htb]
\begin{center}
\begin{tabular}{|l|c|c|}
\hline
$\Zilber$-coloring & $\mathcal{P}_1$ & $\mathcal{P}_2$\\
\hline
trivial & all graphs & all graphs\\
proper & edgeless graphs & all graphs\\
acyclic & edgeless graphs & forests\\
convex & connected graphs & all graphs\\
harmonious & edgeless graphs & at most one edge\\
${\rm mcc}_t$ & conn. cpts size $\leq t$ & all graphs\\
%$DU(H)$ & conn. cpts all $\cong H$ & all graphs\\
$DU(H)$ & disjoint union of $\cong H$ & all graphs\\ 
$t$-imp &  max. degree $t$ & all graphs\\
co-coloring & clique or edgeless & all graphs\\
$\mathcal{AH}$-coloring & $\mathcal{AH}$ & all graphs\\
\hline
\end{tabular}
\end{center}
\ifrevised
\caption{$\mathcal{P}_1$-colorings where the union of any two color classes is in $\mathcal{P}_2$.
In the last line $\mathcal{P}_1$ is an additive induced hereditary property 
(closed under taking induced subgraphs and disjoint unions).}
\else
\caption{$\Zilber$-colorings in the framework of 
\cite{GNOdM16}.
%Theorem~\ref{thm:GNOdM}. 
In the last line $\mathcal{P}_1$ is an additive induced hereditary property 
(closed under taking induced subgraphs and disjoint unions).}
\fi %revised
\label{table-zilber}
\end{table}

%---------------------------------
\ifrevision
\begin{framed}
File: Various files, see below
\\
{\color{red} Last revised: January 12, 2017}
\end{framed}
\else \fi %revision
\section{Detailed case study: Dichotomy theorems}
\label{se:explicit}
%-----------------------------------------------------
\ifrevision
\begin{framed}
File: REV-harmonious-v1.tex
\\
{\color{red} Last revised: November 16, 2016}
\end{framed}
\else \fi %revision
\subsection{Harmonious colorings}
\label{se:harmonious}
%last revised 17. Jan. 2017
%\section{Harmonious colorings}

Recall that a coloring is
{\em harmonious} if it is a proper vertex coloring and
every pair of colors
occurs along some edge at most once, and $\chi_{harm}(G;X)$ is the corresponding graph polynomial.

\begin{proposition}
For every $k \in \N$ there is a polynomial time Turing machine $T(k)$ which computes
$\chi_{harm}(-;k)$. In other words $\N \subseteq \mathrm{EASY}(\chi_{harm})$ non-uniformly.
\end{proposition}
\begin{proof}
For $X=k$ a positive integer and a graph $G$ on $n$ vertices,
$\chi_{harm}(G;k) \neq 0$ implies that $G$ has  at most
$e(k)={k \choose 2 }$ 
edges.
Furthermore, there are $k^{2\cdot e(k)}$
colorings of $2 \cdot e(k)$ 
vertices with $k$ colors.
Let $i(G)$ be the number of isolated vertices of $G$,

$T(k)$ proceeds as follows:
\begin{enumerate}[(i)]
\item
Determine $|E(G)|$. If $|E(G)| \geq e(k)+1$ we have $\chi_{harm}(G;k) = 0$.
\item
Otherwise, we strip $G$ of all its isolated vertices to obtain $G'$,
which has at most $2\cdot e(k)$ vertices.  
\item
We count the colorings of $G'$ which are harmonious, i.e., $\chi_{harm}(G';k)$,
which takes time $t(k)$, 
independently of the number of vertices of $G$.
\item
Therefore $\chi_{harm}(G;k) = k^{i(G)} \cdot \chi_{harm}(G';k)$.
\end{enumerate}
It follows that $T(k)$ runs in time $O(n^2)$ where the constants depend on $k$.
\end{proof}

\begin{remark}
In spite of the low complexity of the above algorithms,
we cannot compute all the coefficients of $\chi_{harm}(G;X)$ in polynomial time.
To compute $\chi_{harm}(G;X)$ for a graph $G$ on $n$ vertices we would have to compute 
for $n+1$ values $k_1,\dots, k_{n+1}$ of $X$ 
the value of the function $\chi_{harm}(G;X)$ and then use Lagrange interpolation. However, 
the above algorithm inspects $k^{2 \cdot e(k)}$
colorings, which for at least one of the values $\chi_{harm}(G;k_i)$ is bigger than $n^{2 \cdot e(n)}$.
It follows from Theorem
\ref{th:harmonious} below that, indeed, $\chi_{harm}(G;X)$ cannot be computed in polynomial time
unless $\bP = \sharp \bP$.
\end{remark}

\begin{theorem}
\label{th:harmonious}
For each $a \in \F-\N$ the evaluation of  $\chi_{harm}(G;a)$  is $\sharp\bP$-hard.
\end{theorem}

\begin{proof}
Let $G$ be a graph. We form $S(G)$ in the following way (see Figure \ref{fig:graham}).
\\
We first form $G_1$ using $G$ by
adding a new  vertex $v_e$ for each edge $e=(u,v)$ of $G$.  Then replace the edge $e$ by two new edges
$(u,v_e)$ and $(v_e,v)$. 
\\
Using $G_1$ we now form $S(G)$:
We connect all the new vertices $v_e: e \in E(G)$ of $G_1$ 
such that they form a complete graph on $|E(G)|$ vertices.

For a graph $G$ and $k\in\mathbb N$,
\begin{equation}\label{eq1}
  \chi_{harm}(S(G);k+e(G))=\chi(G;k)\cdot\binom{k+e(G)}{e(G)}e(G)!,
  \end{equation}
where $e(G)=|E(G)|$ and $\chi(G;k)$ is the chromatic polynomial of $G$ evaluated at $k$.
Since Equation (\ref{eq1}) holds for every $k\in\mathbb N$, it is a polynomial identity,
which can be written as follows:
\begin{equation}\label{eq2}
\chi_{harm}(S(G);X)=X_{(e(n))}\cdot\chi(G;X-e(G)).
\end{equation}
Equation~\eqref{eq2} provides a polynomial time reduction from the coefficients of $\chi(G;X)$ to the coefficients of $\chi_{harm}(S(G);X)$, and vice versa.  
In particular, determining  $\chi_{harm}(S(G);X)$ is $\#{\mathbf P}$-hard.
We also see from Equation~\eqref{eq2} that for $a\in{\mathbf F}-\mathbb N$,
the graph parameter $\chi(G;a-e(G))$ is polynomial time equivalent to the evaluation $\chi_{harm}(S(G);a)$.
%---------------------------
\begin{center}
\begin{figure}
\makebox{
\begin{tikzpicture}[scale=0.7]
\draw[thick] (0,0) -- (3,0) ;
\draw[thick] (0,0) -- (0,3) ;
\draw[thick] (0,0) -- (3,3) ;
\draw[thick] (0,3) -- (3,3) ;
\draw[thick] (3,0) -- (3,3) ;
\draw[fill=black,opaque] (0,0) circle (0.2cm) ; 
\draw[fill=black,opaque] (3,0) circle (0.2cm) ; 
\draw[fill=black,opaque] (0,3) circle (0.2cm) ; 
\draw[fill=black,opaque] (3,3) circle (0.2cm) ; %   \pause
\end{tikzpicture}
$G$
\hspace{1.0cm}
\begin{tikzpicture}[scale=0.7]
\draw[thick] (0,0) -- (3,0) ;
\draw[thick] (0,0) -- (0,3) ;
\draw[thick] (0,3) -- (3,3) ;
\draw[thick] (3,0) -- (3,3) ;
\draw[thick] (0,0) -- (1,2) ;
\draw[thick] (1,2) -- (3,3) ;
\draw[fill=black,opaque] (0,0) circle (0.2cm) ; 
\draw[fill=black,opaque] (3,0) circle (0.2cm) ; 
\draw[fill=black,opaque] (0,3) circle (0.2cm) ; 
\draw[fill=black,opaque] (3,3) circle (0.2cm) ;   % \pause
\draw[fill=red,opaque] (1.5,0) circle (0.2cm) ; 
\draw[fill=red,opaque] (0,1.5) circle (0.2cm) ; 
\draw[fill=red,opaque] (1,2) circle (0.2cm) ; 
\draw[fill=red,opaque] (1.5,3) circle (0.2cm) ; 
\draw[fill=red,opaque] (3,1.5) circle (0.2cm) ;   %  \pause
\end{tikzpicture}
$G_1$
\hspace{1.0cm}
\begin{frame}{}
\begin{tikzpicture}[scale=0.7]
\draw[thick] (0,0) -- (3,0) ;
\draw[thick] (0,0) -- (0,3) ;
\draw[thick] (0,0) -- (1,2) ;
\draw[thick] (1,2) -- (3,3) ;
\draw[thick] (0,3) -- (3,3) ;
\draw[thick] (3,0) -- (3,3) ;
\draw[fill=black,opaque] (0,0) circle (0.2cm) ; 
\draw[fill=black,opaque] (3,0) circle (0.2cm) ; 
\draw[fill=black,opaque] (0,3) circle (0.2cm) ; 
\draw[fill=black,opaque] (3,3) circle (0.2cm) ;    %\pause
\draw[fill=red,opaque] (1.5,0) circle (0.2cm) ; 
\draw[fill=red,opaque] (0,1.5) circle (0.2cm) ; 
\draw[fill=red,opaque] (1,2) circle (0.2cm) ; 
\draw[fill=red,opaque] (1.5,3) circle (0.2cm) ; 
\draw[fill=red,opaque] (3,1.5) circle (0.2cm) ;     %\pause
\draw[red,thick] (1.5,0) -- (0,1.5) ;
\draw[red,thick] (1.5,3) -- (0,1.5) ;
\draw[red,thick] (1.5,3) -- (3,1.5) ;
\draw[red,thick] (1.5,0) -- (3,1.5) ;
\draw[red,thick] (1,2) -- (1.5,0) ;
\draw[red,thick] (1,2) -- (0,1.5) ;
\draw[red,thick] (1,2) -- (1.5,3) ;
\draw[red,thick] (1,2) -- (3,1.5) ;
\draw[red,thick] (1.5,0) -- (1.5,3) ;
\draw[red,thick] (0,1.5) -- (3,1.5) ;
\end{tikzpicture}
\end{frame}
$S(G)$
}
\caption{Constructing $S(G)$ from $G$}
\label{fig:graham}
\end{figure}
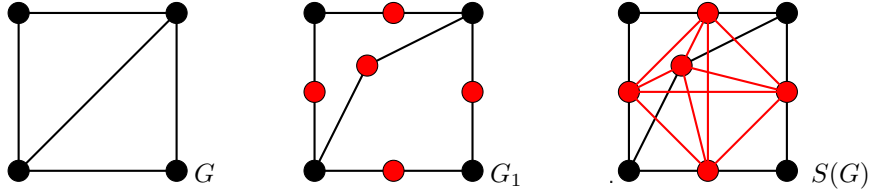
\end{center}

%---------------------------
Finally, evaluating $\chi(G;a-e(G))$ is $\#{\mathbf P}$-hard for $a\not\in\mathbb N$. 
To see this, first observe the following polynomial identity by the multiplicativity of 
the chromatic polynomial over disjoint unions:
\begin{equation}\label{eq:harm3}
\begin{array}{l}
  \chi(G\sqcup K_{1,n};X\!-\!e(G\sqcup K_{1,n}))  = \\
  \chi(K_{1,n};X\!-\!e(G\sqcup K_{1,n})) \cdot \chi(G;X\!-\!e(G\sqcup K_{1,n})) = \\
  (X\!-\!e(G)\!-\!n)\cdot (X\!-\!e(G)\!-\!n\!-\!1)^n\cdot\chi(G;X\!-\!e(G)\!-\!n).
\end{array}
\end{equation}

For $a\in\mathbf F-\mathbb N$, we use Equation~\eqref{eq:harm3} to obtain the evaluations
$\chi(G;a\!-\!e(G)\!-\!n)$ for $n=0,1,\dots, |V(G)|$ 
and we apply Lagrange interpolation to compute $\chi(G;X\!-\!e(G))$.
By Equation~\eqref{eq2} this determines $\chi_{harm}(S(G);X)$, which as we have seen is a $\# {\mathbf P}$-hard problem.
Consequently evaluating $\chi_{harm}$ at $X=a$ is $\#{\mathbf P}$-hard. 
\end{proof}

\begin{remark}
From Equation~\eqref{eq1} we also see that, since evaluating $\chi(G;3)$ is $\#{\mathbf P}$-complete, 
we must indeed have non-uniform polynomial time evaluation of $\chi_{harm}(G;X)$ at positive integer points, 
as stated in Proposition 3.1.
\end{remark}
%----------------------------------------
\ifrevised
\else
Now for $k=a$ 
we have
$$\frac{\chi_{harm}(S(G);a)}{\binom{a}{e}e!}=\chi(G;a-e).$$
We have for $e=e(G)$ that
$\chi(G;a-e)$ is 
$\sharp\bP$-hard for every $a \in \F - \N$ by Theorem \ref{th:linial}.
We want to eliminate the dependence on $G$ from $a-e(G)$.

For this, we use Linial's Trick:
\\
Let $v=|V(G)|$ and $e=|E(G)|$. Then $|E(G \bowtie K_1)| = e +v$, and we get:
$$
\chi_{harm}(G \bowtie K_1;a-(e+v)+1) =  
(a-(e+v)+1) \cdot \chi_{harm}(G; a -(e+v))
$$
Which can be used for every $a \in \F-\N$.
\fi %revised
%---------------------------------
Hence we have shown:
\begin{theorem}
The Difficult Point Dichotomy is true for $\chi_{harm}(G;X)$ 
\\
with 
$\mathrm{EASY}(\chi_{harm}) =\N$ non-uniformly,
\ifrevised
and $\mathrm{\sharp PHARD}(\chi_{harm}) = \F -\N$.
\else
and $\mathrm{NPHARD}(\chi_{harm}) = \F -\N$.
\fi %revised
\end{theorem}

\ifrevised
\else
In anticipation of Section \ref{se:sol} we also note that
$\chi_{harm}(G;X)$ is not $\MSOL$-definable even in the language of hypergraphs.
\fi %revised

%---------------------------------
\ifrevision 
\begin{framed}
File: REV-convex-gn.tex
\\
{\color{red} Last revised: November 17, 2016}
\end{framed}
\else \fi %revision
\subsection{Convex colorings}
\label{se:convex}
Recall that
a {\em convex coloring} of a graph $G$ is an assignment of colors to
its vertices so that for each color $c$ the subgraph of $G$
induced by the vertices receiving color $c$ is connected\footnote{
We consider a graph with no vertices to be connected.
}.
The resulting graph polynomial is $\chi_{convex}(G;X)$.

%For $\chi_{convex}(G;X)$ Linial's Trick is straightforward.
The following is easily verified:
\begin{proposition}
\label{prop:convex}
\begin{enumerate}[(i)]
\ifrevised
\else
\item
$\chi_{convex}(G;X)$ is multiplicative, i.e., for two graphs $G_1, G_2$
and $G = G_1 \sqcup G_2$ the disjoint union, we have
$$
\chi_{convex}(G;X)= \chi_{convex}(G_1;X)  \cdot \chi_{convex}(G_2;X) 
$$
\fi %revised
\item
For $X=1$ we have
$$
\chi_{convex}(G;1)=
\begin{cases}
1 & \mbox{if   } G \mbox{   is   connected} \\
0 & \mbox{else}.
\end{cases}
$$
\item
For $k \in \N^+$ we have
$$
\chi_{convex}(G \sqcup K_1;k) = k \cdot \chi_{convex}(G;k-1).
$$
\end{enumerate}
\end{proposition}
In \cite{mak:Question}, it was asked whether $\chi_{convex}(-;2)$ is $\sharp\bP$-hard.
The question was answered in the positive and posted in \cite{pr:GoodallNoble2008}, but remained unpublished.
Note that the number of convex colorings using at most two
colors is equal to zero if $G$ has three or more connected
components and equal to two if $G$ has exactly two connected
components. So we may restrict our attention to connected graphs.

\begin{theorem}[A. Goodall and S. Noble, \cite{pr:GoodallNoble2008}]
\label{th:goodall-noble}
Evaluating
$\chi_{convex}(G;X)$ for $X=2$ on connected graphs is $\sharp\bP$-complete.
\end{theorem}
The proof is given in Section \ref{se:GN}.
Combining Proposition \ref{prop:convex} and Theorem \ref{th:goodall-noble} we get the following:
\begin{theorem}
\label{th:convex}
$\mathrm{EASY}(\chi_{convex}) = \{0,1\}$ and $\mathrm{\sharp PHARD}(\chi_{convex})= \F - \{0,1\}$.
\end{theorem}

%---------------------------------
\ifrevision 
\begin{framed}
File: REV-triangles.tex
\\
{\color{red} Last revised: November 22, 2016}
\end{framed}
\else \fi %revision
\subsection{$DU(H)$-colorings}
\label{se:triangles}
%last revised 25.Dec. 2016
%\section{$triangualr$-colorings}
%{\tiny Joint work with Miki Hermann}
%-----------
Let $H$ be a fixed connected graph. % on $h$ vertices.
A coloring $f: V(G) \rightarrow [k] $ 
is an {\em $DU(H)$-coloring} with $k$ colors,
%is an {\em triangular coloring} with $k$ colors,
if each color class induces a disjoint union of copies of $H$.
We denote by $\chi_{DU(H)}(G; k)$ the number of 
$DU(H)$-colorings of $G$ with at most $k$ colors,
and by $\hat{\chi}_{DU(H)}(G; k)$ the number of 
$DU(H)$ colorings of $G$ with exactly $k$ colors.

%Let $n(G) = |V(G)|$.
We easily verify:
\begin{proposition}
\begin{enumerate}[(i)]
\item
Being a $DU(H)$-coloring with $k$ colors
is a $\Zilber$-property, hence
$\chi_{DU(H)}(G; k)$ 
is a  polynomial in $k$.
However, $\hat{\chi}_{DU(H)}(G; k)$ is not a $\Zilber$-property, and in general is not a polynomial in $k$.
\item
\ifrevised 
$\chi_{DU(H)}(G; k) = \sum_{i=1}^k {k \choose i} \hat{\chi}_{DU(H)}(G; i)$.
\else
$\chi_{DU(H)}(G; k) = \sum_{i=1}^k \hat{\chi}_{DU(H)}(G; i)$.
\fi %revised
\item
$\chi_{DU(H)}(G; k)$ is multiplicative over disjoint unions.
\item
For $k=1$, a graph $G$ is $DU(H)$-colorable  iff $G$ is a disjoint union of
$H$s.
\item
For $n(G)\not \equiv 0 \mod{n(H)}$ the polynomial $\chi_{DU(H)}(G; k)$ vanishes.
\end{enumerate}
\end{proposition}

Let $v \in V(H)$. We define $\Box_{H,v}(G)$ to be the graph with vertex set $V(G) \sqcup V(H)$,
and edge set $E(G) \sqcup E(H) \sqcup V(G) \times \{v\}$.
We can apply Linial's Trick, cf. Lemma \ref{LTrick}, to analyze the complexity of $\chi_{DU(H)}(G;a)$. 
\begin{proposition}
Let $H$ be a connected graph.
\begin{enumerate}[(i)]
\item
$ \chi_{DU(H)}(\Box_{H,v}(G);k) = k \cdot \chi_{DU(H)}(G;k-1)$.
\item
For every $a,b \in \N$ and $b > a$,
$\chi_{DU(H)}(G;a)$ is polynomial time reducible to
$ \chi_{DU(H)}(G;b)$.
\item
For every $a_0 \in \F-\N$, computing the coefficients of
$\chi_{DU(H)}(G;X)$ is polynomial time reducible to $\chi_{DU(H)}(G;a_0)$.
\end{enumerate}
\end{proposition}
The proof is the same as in \cite{ar:Linial86}. For the convenience of the reader we sketch it here.
\begin{proof}
(i) 
%Assume $V(\Box_{H,v}(G)) = V(G) \cup \{u_1, \ldots , u_h\}$. 
All the vertices  of $V(H)$
%$\{u_1, \ldots , u_h\}$
have to be colored by the same color but differently from the vertices in $V(G)$. 
\\
(ii) Apply (i) $b-a$ many times.
\\
(iii) Let $G_0 = G$, $G_{i+1} =\Box_{H,v}(G_i)$. Using $\chi_{DU(H)}(-;a_0)$ we can compute
$\chi_{DU(H)}(G_i;a_0)$ for sufficiently many $i$'s and then use Lagrange Interpolation to compute
the coefficients of $\chi_{DU(H)}(G;X)$.
\end{proof}

Related  decision and counting problems have been considered in the literature. 
%\cite{ar:KirkpatrickHell-1983,ar:Hunt-etal-1998}.

In the following $\alpha$ is a nonnegative integer.
\begin{framed}
\begin{trivlist}\item[]
\textbf{Problem:} $\mathrm{CliqueCover_{\alpha}}$\\
\textbf{Input:} Graph $G$. If $\alpha \geq 1$ then $n(G) ={\alpha} \cdot m$.\\
\textbf{Question:} 
Can we partition $V(G)$ into sets $V_i$ such that each $V_i$ 
induces a clique (for $\alpha =0$) or induces a copy of $K_{\alpha}$ (for $\alpha \geq 1$)?
\end{trivlist}
\end{framed}

\begin{framed}
\begin{trivlist}\item[]
\textbf{Problem:} $\sharp\mathrm{CliqueCover_{\alpha}}$\\
\textbf{Input:} Graph $G$. If $\alpha \geq 1$ then $n(G) ={\alpha} \cdot m$.\\
\textbf{Output:}  The number of partitions of $V(G)$ into sets $V_i$ such that each $V_i$ 
induces a clique (for $\alpha =0$) or induces a copy of $K_{\alpha}$ (for $\alpha \geq 1$).
\end{trivlist}
\end{framed}

We note for $\alpha =0$ this is the classical clique cover decision problem of  
Richard Karp's original 21 problems, see \cite{bk:GJ}.

From the literature we know the following:

\begin{theorem}
\begin{enumerate}[(i)]
\item
$\mathrm{CliqueCover_{0}}$ is $\bNP$-complete and $\sharp\mathrm{CliqueCover_{0}}$
is $\sharp\bP$-complete, \cite{bk:GJ,ar:Hunt-etal-1998}.
\item
$\mathrm{CliqueCover_{1}}$ and $\sharp\mathrm{CliqueCover_{1}}$ are both trivial.
\item
$\mathrm{CliqueCover_{2}}$ is the same as finding a perfect matching, and is in $\bP$,
and $\sharp\mathrm{CliqueCover_{2}}$  is
$\sharp \bP$-complete by \cite{ar:valiant-SIAM}.
\item
$\mathrm{CliqueCover_{3}}$ is $\bNP$-complete and 
$\sharp\mathrm{CliqueCover_{3}}$
is $\sharp\bP$-complete, \cite{ar:KirkpatrickHell-1983,ar:Hunt-etal-1998}.
\end{enumerate}
\end{theorem}

\begin{problem}
\label{prob-6}
For which $\alpha \geq 4$ is
$\sharp\mathrm{CliqueCover_{\alpha}}$ $\sharp\bP$-complete?
\end{problem}

The connection between 
$\sharp\mathrm{CliqueCover_{\alpha}}$
and $\chi_{DU(K_{\alpha})}(G;X)$
is given as follows:

\begin{proposition}
\begin{enumerate}[(i)]
\item
$\chi_{DU(K_1)}(G;X) = \chi(G;X)$, the chromatic polynomial.
\item
$\hat{\chi}_{DU(K_{\alpha})}(G;\frac{n}{\alpha}) = \left(\frac{n}{\alpha}\right)! \cdot \sharp\mathrm{CliqueCover_{\alpha}}$ 
where $\alpha \neq 0$ and 
$n(G)$ is divisible by $\alpha$.
\end{enumerate}
\end{proposition}
\begin{proof}
If $n = n(G)$ is divisible by $\alpha$, the maximal value $k$ such that
$\hat{\chi}_{DU(K_{\alpha})}(G;k) \neq 0$ is $k= \frac{n}{\alpha}$.
In this case the subgraph induced by each color class is isomorphic to $K_{\alpha}$,
and there are $\left(\frac{n}{\alpha}\right)!$ 
many such colorings.
\end{proof}

%\begin{proposition}
%For $\alpha \geq 2$ evaluating $\chi_{DU(K_{\alpha})}(G;2)$ is $\sharp \bP$-complete.
%\end{proposition}
\newcommand{\alphaOfTwoAlpha}{\sharp\alpha\mbox{-}\mathrm{of}\mbox{-}2\alpha\mbox{-}\mathrm{SAT}}
%--------------------------------------
We will prove that $\chi_{DU(K_\alpha)}(G;2)$ is $\sharp\bP$-hard for every $\alpha\geq 2$
in Theorem \ref{th:DU-hard} below by describing a polynomial time reduction
from the $\sharp\bP$-hard problem 
$\alphaOfTwoAlpha$.
From \cite{ar:CreignouHermann96} we know that
$\alphaOfTwoAlpha$ is $\sharp \bP$-complete.

\begin{framed}
\begin{trivlist}\item[]
\textbf{Problem:} $\alphaOfTwoAlpha$ \\
\textbf{Input:} $2\alpha$CNF formula $\Theta$. \\
\textbf{Output:} The number of truth assignments such that, in each clause, $\alpha$ literals
are true and the other $\alpha$ are false. 
\end{trivlist}
\end{framed}

Now we proceed to describe the construction of a graph $\overline{G}_\Theta$ for every  
input $\Theta$ to $\alphaOfTwoAlpha$. We will prove that  
$\alphaOfTwoAlpha(\Theta)=\chi_{DU(K_\alpha)}(\overline{G}_\Theta;2)$. 

Let $\Theta = \bigwedge_{i=1}^s C_i$ be 
a $2\alpha$CNF formula with clauses $C_i = l_{i,1} \lor \cdots \lor l_{i,2\alpha}$. 
Without loss of generality, let the variables which occur in $\Theta$ be $x_t: t\in [r]$. 
The literals $l_{i,j}$ of $\Theta$ are either variables $x_t$ or negation of variables $\neg x_t$. 
We define $\mathrm{var}(x_t) = \mathrm{var}(\neg x_t) = x_t$.

For every clause $C_i$ of $\Theta$,
let $G_i$ be a clique of size $2\alpha$ whose vertices are labeled by 
$l_{i,1},\ldots,l_{i,2\alpha}$ respectively. 
Let $G_\Theta$ be the disjoint union of $G_i$ for all clauses $C_i$ of $\Theta$. 
For every variable $x_t:t\in [r]$ let $D_t$ be a clique of size $2\alpha$ in which $\alpha$ vertices are labeled $x_t$
and the other $\alpha$ vertices are labeled $\neg x_t$. 
Let $D_\Theta$ be the disjoint union of $D_t: t\in [r]$.

We construct a graph $\overline{G}_\Theta$ as follows. 
The graph $\overline{G}_\Theta$ is obtained from the disjoint union $G_\Theta \sqcup D_\Theta$ of $G_\Theta$ and $D_\Theta$ 
by
adding an edge between any vertex of $G_\Theta$
and any vertex of $D_{\Theta}$ whose labels are negations of each other
(i.e. one is $x_t$ and the other is $\neg x_t$, $t\in [r]$). 

Any coloring $c:V(\overline{G}_\Theta)\to [2]$ can be interpreted
as assigning the truth values \textit{true} (for the color $1$) and \textit{false} (for the color $2$) to the literals
labeling the vertices. 
A coloring $c$ of $\overline{G}_\Theta$ is \textit{consistent} if the truth values
assigned by $c$ to the literals induce 
a well-defined truth value assignment $as_c$ to the variables. 
More precisely, $c$ is consistent if
any two vertices labeled the same (both $x_t$ or both $\neg x_t$, $t\in [r]$)
have the same color 
and any two vertices with opposing labels ($x_t$ and $\neg x_t$, $t\in [r]$) have different colors.

\begin{lem}\label{lem:consistent-colorings}
Let $c:V(\overline{G}_\Theta) \to [2]$ be a coloring of $\overline{G}_\Theta$. 
\begin{enumerate}[(i)]
\item If $c$ is a $DU(K_\alpha)$-coloring, then $c$ is consistent. 
\item If $c$ is consistent, then the following are equivalent:
\begin{enumerate}[(a)]
 \item $c$ is a $DU(K_\alpha)$-coloring.
 \item $as_c$ satisfies the condition of $\alphaOfTwoAlpha$. 
\end{enumerate}
\end{enumerate}
\end{lem}
\begin{proof}
For (i), we assume $c$ is a $DU(K_\alpha)$-coloring.
For all $i$ and $t$, $G_i$ and $D_t$ are cliques of size $2\alpha$. 
Hence, in each of the cliques $G_i$ and $D_t$ exactly $\alpha$ vertices are colored $1$ and the other
$\alpha$ vertices are colored $2$. As a consequence, no vertex $u$ 
is adjacent to any vertex $v$ such that $c(u)=c(v)$
and  $u$ and $v$ belong to different cliques in $G_\Theta \sqcup D_\Theta$. 

Let $u$ be a vertex of $G_\Theta$ such that $u$
is labeled by $l$ and $\mathrm{var}(l)=x_t$. Since there are edges between $u$
and the $\alpha$ vertices labeled by the negation of $l$ in $D_t$, these $\alpha$ vertices cannot have the color $c(u)$.
As a consequence, the $\alpha$ vertices labeled by $l$ in $D_t$ must have the color $c(u)$. 
We get that all the vertices of $\overline{G}_\Theta$ labeled with $l$ receive the same color,
which is different from the color of the vertices labeled by the negation of $l$, and we get (i).
Moreover, $c$ assigns exactly $\alpha$ vertices in each $G_i$ to each of the colors, 
which implies that $as_c$ assigns exactly $\alpha$ of the literals of $C_i$ to \textit{true}
and the other $\alpha$ to \textit{false}.
Hence $as_c$ is counted by $\alphaOfTwoAlpha$, and we get the direction (a)$\Rightarrow$(b) of (ii). 

For the direction (b)$\Rightarrow$(a) of (ii), let $c$ be a consistent coloring 
such that $as_c$ is counted by $\alphaOfTwoAlpha$.
We will prove that $c$ is a $DU(K_\alpha)$-coloring. 
Since $as_c$ is counted by $\alphaOfTwoAlpha$, 
it assigns \textit{true} to $\alpha$ literals and
\textit{false} to the other $\alpha$ literals of each clause. 
Hence, $c$ colors every clique $G_i$ so that $\alpha$ vertices receive color $1$ and the other $\alpha$ receive color $2$.
Each of the two colors induces a clique of size $\alpha$ in $G_i$. 
Since $as_c$ is consistent and $D_t$ consists of $\alpha$ vertices  labeled by some label $l$ and 
$\alpha$ vertices labeled by the negation of $l$, each color class of $c$ induces a clique of size $\alpha$ in $D_t$. 
It remains to notice that, since $c$ is consistent, any other edge of $\overline{G}_\Theta$ crosses between the color classes,
hence does not belong to the induced subgraph of any of the two colors. Consequently, each of the colors 
induces a disjoint union of cliques of size $\alpha$ in $\overline{G}_\Theta$. 

\end{proof}

As a consequence of Lemma \ref{lem:consistent-colorings} 
we have $\alphaOfTwoAlpha(\Theta) = \chi_{DU(K_\alpha)}(\overline{G}_\Theta;2)$. 

From \cite{ar:CreignouHermann96} we have:
\begin{theorem}\label{thm:CH96-alphaOfTwoAlpha}
For every $\alpha\geq 2$, $\alphaOfTwoAlpha$ is $\sharp\bP$-hard. 
\end{theorem}

From Lemma \ref{lem:consistent-colorings} and Theorem \ref{thm:CH96-alphaOfTwoAlpha}, 
and using the fact that the construction of $\overline{G}_\Theta$ can be done in polynomial time, we get:
\begin{theorem}\label{th:DU-hard}
For every $\alpha \geq 2$, 
$\chi_{DU(K_\alpha)}(G;2)$ is $\sharp\bP$-hard. 
\end{theorem}

\begin{proposition}\label{prop:DU-easy}
$\chi_{DU(K_\alpha)}(G;0)$ and
$\chi_{DU(K_\alpha)}(G;1)$ 
are polynomial time computable.
\end{proposition}
\begin{proof}
For $X=0$, $\chi_{DU(K_\alpha)}(G;0)$ is always $0$ and hence trivially polynomial time computable. 
For $X=1$, there is exactly one coloring $c:V(G)\to[1]$ and $c$ is a $DU(K_\alpha)$-coloring 
iff $G$ is a disjoint union of copies of $K_\alpha$, which can be verified in polynomial time. 
\end{proof}

Putting all this together we get the full complexity spectrum for
$$\chi_{DU(K_{\alpha})}(G;X)$$ for $\alpha \geq 2$. 
Recall that 
$\chi_{DU(K_{1})}(G;X) =\chi(G;X)$ 
is the chromatic polynomial.

\begin{theorem}
\label{th:cliquealpha}
For all $\alpha \geq 2$, we have
$\mathrm{EASY}(\chi_{DU(K_{\alpha})}) = \{0,1\}$ and
\\
$\mathrm{\sharp PHARD}(\chi_{DU(K_{\alpha})}) = \F - \{0,1\}$.
\\
Moreover,  for $\alpha \geq 1$
%In other words, 
the Difficult Point Dichotomy is true for $\chi_{DU(K_{\alpha})}(G;X)$,
as for $\alpha=1$ it includes the chromatic polynomial.
\end{theorem}

%------------------------------------
%---------------------------------
\ifrevision
\begin{framed}
File: KO-proof-gn.tex
\\
{\color{red} Last revised: November 17, 2016}
\end{framed}
\else \fi %revision
\section{Counting convex colorings is $\sharp \bP$-complete: the proof}
\label{se:cproof}
%\section{}
%\section{Proof of Theorem \ref{th:goodall-noble}}
\label{se:GN}
The purpose
of this section is to prove Theorem \ref{th:goodall-noble}.
%resolve a question of
This answers a question originally asked in
%Makowsky~
\cite{mak:Question}, 
whether the problem of counting the number
%by showing that counting the number
of convex colorings using at most two colors, 
i.e. computing $\chi_{convex}(-;2)$, 
is $\#$P-complete on connected graphs.

\ifrevised
\else
More precisely we show that the following problem is
$\#$P-complete.
\begin{framed}
\begin{trivlist}\item[]
\textbf{Problem:} $\#$\textsc{Convex Two-Colouring's}\\
\textbf{Input:} Graph $G$.\\
\textbf{Output:} The number of $f:V(G)\rightarrow \{0,1\}$ such
that both $G:f^{-1}(0)$ and $G:f^{-1}(1)$ are connected.
\end{trivlist}
\end{framed}
For the definition of the complexity class $\#$P,
%see~\cite{garey:comp+intract} or~\cite{papadimitriou}.
see \cite{bk:GJ} or \cite{bk:papadimitriou94}.
\fi %revised
This section follows almost verbatim the preprint posted as \cite{pr:GoodallNoble2008}.

\ifrevised
\subsection{Cuts, crossing sets, and cocircuits}
\else
\subsection*{\bf Reductions}
We begin with a few definitions.
\fi
Let $X$ and $Y$ be disjoint sets of vertices of a graph $G$. The set
of edges of $G$ that have one endpoint in $X$
and the other in $Y$ is denoted by $\delta(X,Y)$. Given a connected graph $G$, a cut is a
partition of $V(G)$ into two (non-empty) sets called its
\emph{shores}. The \emph{crossing set} of a cut with shores $X$
and $Y$ is $\delta(X,Y)$. A cut is a \emph{cocircuit} if no proper
subset of its crossing set is the crossing set of a cut. 
Given a set $A$ of edges, we denote by $G\setminus A$ the graph
obtained by removing the edges in $A$ from $G$. 
We will use the following observation:
\begin{lem}
Let $G$
be a connected graph. Then a cut of $G$ with crossing set $A$ is a
cocircuit if and only if $G\setminus A$ has exactly two connected
components.
\end{lem}

Note that our terminology is slightly at odds with standard usage
in the sense that the terms cut and cocircuit usually refer to
what we call the crossing set of respectively a cut and a
cocircuit. Our usage prevents some cumbersome descriptions in the
proofs. We will however abuse our notation by saying that a cut or
cocircuit has size $k$ if its crossing set has size $k$.

\ifrevised
\subsection{Reductions}
We prove Theorem~\ref{th:goodall-noble} by a sequence of reductions
involving the following problems:
\else
We consider the complexity of the following problems.
\fi
\begin{framed}
\begin{trivlist}\item[]
\textbf{Problem:} $\#$\textsc{Cocircuits}\\
\textbf{Input:} Connected graph $G$.\\
\textbf{Output:} The number of cocircuits of $G$.
\end{trivlist}
\end{framed}

\begin{framed}
\begin{trivlist}\item[]
\textbf{Problem:} $\#$\textsc{Required Size Cocircuits}\\
\textbf{Input:} Connected graph $G$, strictly positive integer $k$.\\
\textbf{Output:} The number of cocircuits of $G$ of size $k$.
\end{trivlist}
\end{framed}

\begin{framed}
\begin{trivlist}\item[]
\textbf{Problem:} $\#$\textsc{Max Cut}\\
\textbf{Input:} Connected graph $G$, strictly positive integer $k$.\\
\textbf{Output:} The number of cuts of $G$ of size $k$.
\end{trivlist}
\end{framed}

\begin{framed}
\begin{trivlist}\item[]
\textbf{Problem:} $\#$\textsc{Monotone 2-SAT}\\
\textbf{Input:} A Boolean formula in conjuctive normal form in
which each clause contains two variables and there are no negated
literals.\\
\textbf{Output:} The number of satisfying assignments.
\end{trivlist}
\end{framed}
It is easy to see that each of these problems is a member of
$\#$P.

The following result is from Valiant's seminal paper on
%$\#$P~\cite{val:enumeration}.
$\#$P~\cite{ar:valiant-SIAM}.
\begin{theorem}~\label{th:2SAT}
$\#$\textsc{Monotone 2-SAT} is $\#P$-complete.
\end{theorem}

We will establish the following reductions.
\begin{trivlist}\item[]
\begin{center}
$\#$\textsc{Monotone 2-SAT }$\propto\#$\textsc{Max
Cut }$\propto\#$\textsc{Required Size Cocircuits}\\
$\propto\#$\textsc{Cocircuits }$\propto$
\ifrevised
$\chi_{convex}(-;2)$
\else
$\#$\textsc{Convex Two-Colorings}. 
\fi
\end{center}
\end{trivlist}
Combining Theorem~\ref{th:2SAT} with these reductions shows that
each of the five problems that we have discussed is
$\#$P-complete. As far as we are aware, each of these reductions
is new. We have not been able to find a reference showing that
$\#$\textsc{Max Cut} is $\#P$ complete. Perhaps it is correct to
describe this result as `folklore'. In any case we provide a proof
below. Some similar problems, but not exactly what we consider
here, are shown to be $\#P$ complete 
%in~\cite{provan:cuts}.
in~\cite{prba:83}.

\begin{lem}
$\#$\textsc{Monotone 2-SAT} $\propto \#$\textsc{Max Cut}.
\end{lem}
\begin{proof}
Suppose we have an instance $I$ of $\#$\textsc{Monotone 2-SAT}
with variables $x_1,\ldots,x_n$ and clauses $\mathcal C=\{
C_1,\ldots,C_m\}$. We construct a corresponding instance $M(I)=(G,k)$ of
$\#$\textsc{Max Cut} by first defining a graph $G$ with vertex set
\[ \{x\} \cup \{x_1,\ldots,x_n\} \cup  \bigcup \{\{c_{i,1},\ldots,c_{i,6}\}:1\leq i \leq m\}\]
For each clause we add nine edges to $G$. Suppose $C_j$ is $x_u
\vee x_v$. Then we add the edges
\[ xc_{j,1}, c_{j,1}c_{j,2}, c_{j,2}x_u, x_uc_{j,3}, c_{j,3}c_{j,4}, c_{j,4}x_v, x_vc_{j,5}, c_{j,5}c_{j,6}, c_{j,6}x.\]
Distinct clauses correspond to pairwise edge-disjoint circuits, each
of size $9$. Now let $k =
8|\mathcal C|$. We claim that the number of solutions of instance
$M(I)$ of \#{\sc Max Cut} is
equal to $2^{|\mathcal C|}$ times the number of
satisfying assignments of $I$.

%To prove this we show how to construct $2^{|C|}$ solutions to the
%instance of $\#$\textsc{Max Cut} such that from any satisfying assignment of
%the $\#$\textsc{Monotone 2-SAT} instance. Moreover the construction ensures that the
%two sets of solutions to $\#$\textsc{Max Cut} corresponding to two distinct
%satisfying assignments of the $\#$\textsc{Monotone 2-SAT} instance are disjoint.
%Finally we show that any solution to the instance of $\#$\textsc{Max Cut}
%arises in this way.

Given a solution of $I$, let $L_1$ be the set of variables assigned
the value true and $L_0$ the set of variables assigned false
together with $x$. Observe that for each clause $C_j=x_u\vee x_v$
there are three choices of how to add the vertices
$c_{j,1},\ldots,c_{j,6}$ to either $L_0$ or $L_1$ so that exactly
eight edges of the circuit corresponding to $C_j$ have one
endpoint in $L_0$ and the other in $L_1$. Clearly the choices for
each clause are independent and distinct satisfying assignments
result in distinct choices of $L_0$ and $L_1$. Any of the choices
of $L_0$ and $L_1$ constructed in this way may be taken as the
shores of a cut of size $8|\mathcal C|$. Hence we have constructed
$3^{|\mathcal C|}$ solutions of $M(I)$ corresponding to each
satisfying assignment of $I$.

%Let $C$ be the circuit of $G$ corresponding to $C_j$ and let $a$
%and $b$ be distinct members of $\{u,v,x\}$. Suppose $a$ and $b$
%both lie in $L_0$. Then there are two ways of adding the two
%vertices on the path of length three between $a$ and $b$ forming
%part of $C$ so that exactly two edges on this path have one
%endpoint in $L_0$ and the other in $L_1$. Similarly if $a$ and $b$
%both lie in $L_1$. Otherwise one of $a$ and $b$ lies in $L_0$ and
%the other in $L_1$ and there is just one way of adding the
%vertices on the path between $a$ and $b$ so that all three edges
%have one endpoint in $L_0$ and the other in $L_1$. In this way
%exactly eight edges from each circuit of $G$ have one endpoint in
%$L_0$ and the other in $L_1$.
%If one of $a$ and $b$ lies in $L_0$ and the other in $L_1$ then
%add the two vertices on the path of length three between them
%forming

In any graph the intersection of a set of edges forming a circuit
and a crossing set of a cut must always have even size. So in a
solution of $M(I)$ each of the edge-disjoint circuits making up
$G$ and corresponding to clauses of $I$ must contribute exactly
eight edges to the cut. Suppose $U$ and $V\setminus U$ are the
shores of a cut of $G$ of size $8|\mathcal C|$. Then it can easily
be verified that for any clause $C=x_u \vee x_v$ both $U$ and
$V\setminus U$ must contain at least one element from
$\{x_u,x_v,x\}$. So it is straightforward to see that this
solution of $M(I)$ is one of those constructed above corresponding
to the satisfying assignment where a variable is false if and only
if the corresponding vertex is in the same set as $x$.
\end{proof}

\begin{lem}
$\#$\textsc{Max Cut} $\propto \#$\textsc{Required Size
Cocircuits}.
\end{lem}
\begin{proof}
Suppose $(G,k)$ is an instance of $\#$\textsc{Max Cut}. We construct an instance $(G',k')$ of
$\#$\textsc{Required Size Cocircuits} as follows. Suppose $G$ has
$n$ vertices. To form $G'$ add new vertices
$x,x',x_1,\ldots,x_{n^2}$ to $G$. Now add an edge from $x$ to
every other vertex of $G'$ except $x'$ and similarly add an edge
from $x'$ to every other vertex of $G'$ except $x$. Let
$k'=n^2+n+k$. From each solution of the $\#$\textsc{Max Cut}
instance $(G,k)$ we construct $2^{n^2+1}$ solutions of the
$\#$\textsc{Required Size Cocircuits} instance $(G',k')$. Suppose
$C=(U,V(G)\setminus U)$ is a solution of $(G,k)$ then we may
freely choose to add $x,x',x_1,\ldots,x_{n^2}$ to either $U$ or
$V(G)\setminus U$, with the sole proviso that $x$ and $x'$ are not
both added to the same set, to obtain a cut in $G'$ of size
$k'=n^2+n+k$. Furthermore this cut is a cocircuit because both
shores contain exactly one of $x$ and $x'$ and so they induce
connected subgraphs.

Conversely suppose $C=(U,V(G')\setminus U)$ is a cocircuit in $G'$
of size $k'$. Consider the pair of edges incident with $x_j$. Note
that the partition $(x_j,V(G') \setminus x_j)$ is a cocircuit. So
if both of the edges incident with $x_j$ are in the crossing set
of $C$ then because of its minimality we must have $C=(x_j,V(G')
\setminus x_j)$ which is not possible because $C$ would then have
size $2<k'$. Now suppose that neither edge incident with $x_j$ is
in the crossing set of $C$. 

Then both $x$ and $x'$, and hence $x_1, \ldots, x_{n^2}$,
lie in the same
block of the partition constituting $C$. But since $G$ is a simple
graph, the maximum possible size of such a cocircuit is at most
$2n+\binom n 2 < n^2+n+k$. Hence precisely one of the edges
adjacent to $x_j$ is in the crossing set. So $x$ and $x'$ are in
different shores of $C$. Hence the crossing set of $C$ contains:
for each $j$ precisely one edge incident to $x_j$ ($n^2$ edges in
total), for each $v \in V(G)$ precisely one of edges $vx$ and
$vx'$ ($n$ edges in total) and $k$ other edges with both endpoints
in $V(G)$. So the partition $C'=(U\cap V(G),V(G)\setminus U)$ is a
cut of $G$ of size $k$ and hence $C$ is one of the cocircuits
constructed in the first part of the proof. Consequently the
number of solutions of the instance $(G',k')$ of
$\#$\textsc{Required Size Cocircuits} is $2^{n^2+1}$ multiplied by
the number of solutions of the instance $(G,k)$ of $\#$\textsc{Max
Cut}
\end{proof}

\begin{lem}
$\#$\textsc{Required Size
Cocircuits }$\propto\#$\textsc{Cocircuits}
\end{lem}
\begin{proof}
Given a graph $G$ let $N_k(G)$ denote the number of cocircuits of
size $k$ and $N(G)$ denote the total number of cocircuits. Let
$G_l$ denote the $l$-stretch of $G$, that is, the graph formed
from $G$ by replacing each edge of $G$ by a path with $l$ edges.
Let $m=|E(G)|$. Then we claim that
\[ N(G_l) =\sum_{k=1}^{m} l^kN_k(G) + \binom l 2 m.\]

To see this, suppose that $C$ is a cocircuit of $G_l$. If the
crossing set of $C$ contains two edges from one of the paths
corresponding to an edge of $G$ then by the minimality of the
crossing set of $C$ we see that it contains precisely these two
edges. The number of such cocircuits is $\binom l 2 m$.

Otherwise the crossing set $C$ contains at most one edge from each
path in $G_l$ corresponding to an edge of $G$. Suppose the
crossing set of $C$ contains $k$ such edges. Let $A$ denote the
corresponding edges in $G$. Then $A$ is the crossing set of a
cocircuit in $G$ of size $k$. From each such cocircuit we can
construct $l^k$ cocircuits of $G_l$ by choosing one edge from each
path corresponding to an edge in $A$. The claim then follows.

If we compute $N(G_1),\ldots,N(G_{m})$ then we may retrieve
$N_1(G),\ldots,N_{m}(G)$ by using Gaussian elimination because
the matrix of coefficients of the linear equations is an invertible Vandermonde matrix. The fact that the
Gaussian elimination may be carried out in polynomial time follows
from~\cite{edmonds:linalg}. \end{proof}

\begin{lem}
$\#$\textsc{Cocircuits }$\propto$
\ifrevised
$\chi_{convex}(-;2)$
\else
$\#$\textsc{Convex Colorings}. 
\fi
\end{lem}
\begin{proof}
The lemma is easily proved using the following observation. When
two colors are available, there are two convex colorings of a
connected graph using just one color and the number of convex
colorings using both colors is equal to twice the number of
cocircuits.
\end{proof}

The preceding lemmas imply our main result, Theorem \ref{th:goodall-noble},
that 
\ifrevised
$\chi_{convex}(-;2)$
\else
the counting problem $\#$\textsc{Convex Colorings}. 
\fi
is $\#$P-hard.

%------------------------------------
%
%---------------------------------
\ifrevision
\begin{framed}
File: Various files, see below
\\
{\color{red} Last revised: January 12, 2017}
\end{framed}
\else \fi %revision
\section{Detailed case study: Discrete spectra}
\label{se:explicit-1}
In this section we dicuss cases where we were not able to find a suitable version of Linial's Trick,
but where we could determine the complexity of the evaluations for non-negative integers.
%---------------------------------
\ifrevision
\begin{framed}
File: REV-edgecolorings.tex
\\
{\color{red} Last revised: November 12, 2016}
\end{framed}
\else \fi %revision
\subsection{Proper edge colorings}
\label{se:edgecolorings}
%\section{Proper edge colorings $\chi_{edge}(G;X)$}

Recall that $\chi_{edge}(G;k)$ counts the number of proper edge colorings of a graph $G$
with $k$ colors. It is a polynomial in $k$ because
$$
\chi_{edge}(G;k) = \chi(L(G);k),
$$
where $L(G)$ is the line graph of $G$ and $\chi(G;k)$ is the chromatic polynomial.
We have
$\chi_{edge}(G;0) = 0$ and
$$\chi_{edge}(G;1)= 
\begin{cases}
1 & \mbox{if   } G \mbox{ consists of isolated edges and vertices}\\
0 & \mbox{otherwise}
\end{cases}
$$
Although $\chi_edge(G;k)=\chi(L(G);k)$, where $L(G)$ is the line graph of $G$, not
every graph $G$ is the line graph of some graph $G'$.

The class of all finite line graphs, $\mathcal{LG}$, has been completely characterized,
\cite{beineke1970characterizations,bk:BrandstaedtLeSpinrad}.
\begin{theorem}[Beineke, 1970, \cite{beineke1970characterizations}]
There are nine graphs 
$F_i: 1 \leq i \leq 9$, 
each with at most $6$ vertices, 
such that
$G \in \mathcal{LG}$ if and only if no $F_i$ is an induced subgraph of $G$.
\end{theorem}
The complexity spectrum of the 
chromatic polynomial restricted to the class $\mathcal{LG}$
has, to the best of our knowledge, not been studied.

\ifskip
\else
Linial's Trick is straightforward:
Let 
$v = |V(G)|$ and $e = |E(G)|$. 
Consider the graphs $G_1 =G \bowtie K_1$ and $G_2 \bowtie K_2$. 
$G_1$ has one new vertex and $v$ new edges. Hence
$G_2$ has $v_2$ new vertices and $2\cdot v +1$ new edges. Hence
$$
\chi_{edge}(G \bowtie K_1;k + v) ) =  v! \cdot  \chi_{edge}(G;k)
$$
or alternatively
$$
\chi_{edge}(G \bowtie K_2;k +( 2\cdot v +1) ) = (2\cdot v +1)! \cdot  \chi_{edge}(G;k)
$$
We conclude that evaluating $\chi_{edge}(G;a)$ is equally difficult for all $a \in \C -\{0,1\}$.
\fi %skip

Surprisingly, the complexity of counting proper edge colorings was proven $\sharp\bP$-hard
only recently, \cite{pr:CaiGuoWilliams2014}:
\begin{theorem}[J. Y. Cai, H. Guo, T. Williams, 2014]
\label{th:CaiGuoWilliams}
For $k \in \N$ we have:
\begin{enumerate}[(i)]
\label{th:edgecoloring}
\item
\ifrevised
$\chi_{edge}(G;k)$ 
\else
$\sharp$-EdgeColoring 
\fi %revised
is $\sharp\bP$-hard over planar $r$-regular graphs
for all $k \geq r \geq 3$.
\item
$\chi_{edge}$ is trivially tractable when $k \geq r \geq 3$ does not hold.
\end{enumerate}
\end{theorem}

The proof given in 
\cite{pr:CaiGuoWilliams2014}
reduces $\chi_{edge}(G;k)$ to computation of the diagonal
of the Tutte polynomial $T(G;X,X)$
using several intermediate steps via holants\footnote{
For background on holants, cf.
\cite{ar:Valiant2008,ar:CaiLuXia2011}}.
\begin{problem}
\label{prob-4}
Find an elementary (holant-free) proof of Theorem \ref{th:edgecoloring}.
\end{problem}

Theorem \ref{th:CaiGuoWilliams} gives us the discrete complexity spectrum for $k\in \N$.

We were unable to adapt Linial's Trick to proper edge colorings.
Therefore we do not know how to determine the complexity
of $\chi_{edge}(G;X)$ for $X=a$ and $a \in \C- \N$ or even $a \in \Q- \N$.

\begin{problem}
\label{prob-5}
\label{p:edgecolorings}
Determine the full complexity spectrum of $\chi_{edge}(G;X)$ for $X=a$ and $a \in \Q$ or $a \in \C$.
\end{problem}
%J. Y. Cai, H. Guo, T. Williams\\
%The complexity of counting edge colorings and a dichotomy 
%for some higher domain Holant problems,
%FOCS 2014 (full paper on arXiv {\tt http://arxiv.org/pdf/1404.4020.pdf}, 75 pages)

%---------------------------------
\ifrevision 
\begin{framed}
\noindent
File: JAM-mcc.tex
\\
{\color{red} Last revised: January 12, 2017}
\end{framed}
\else \fi %revision
\subsection{$mcc_t$-colorings}
\label{se:mcc}
Let $t \in \N$.
Recall that a 
coloring $f: V(G) \rightarrow [k]$ is an {\em $mcc_t$ -coloring} with $k$ colors,
if the connected components of each color class have at most $t$ vertices.

We easily verify:
\begin{proposition}
\label{prop:mcc-basic}
\begin{enumerate}[(i)]
\item
For fixed $t \in \N^+$
being an $mcc_t$ -coloring with $k$ colors
is a $\Zilber$-property, hence
$\chi_{mcc_t}(G; k)$ is a polynomial in $k$ (but not in $t$).
\item
$\chi_{mcc_t}(G; k)$ is multiplicative over disjoint unions.
\item
For $t=1$ we have $\chi_{mcc_1}(G; k)= \chi(G; k)$, i.e., it is the chromatic polynomial.
\item
For $k=1$ a graph $G$ is $mcc_t$ -colorable  iff $G$ is a disjoint union of connected graphs $H$  with  at most
$t$ vertices.
\end{enumerate}
\end{proposition}

We next establish a complexity result.
\begin{theorem}
\label{th:mcc_2}
Computing $\chi_{mcc_t}(G; 2)$ is $\sharp\bP$-complete for $t \geq 2$.
\end{theorem}
\ifskip\else
\documentclass[11pt,a4paper]{article}

\usepackage{fullpage}
\usepackage{amsmath,amsthm}
\usepackage{latexsym,amssymb}
\usepackage[T1]{fontenc}
\usepackage{times}
\usepackage{subfig}
\usepackage{graphicx}

\theoremstyle{plain}
\newtheorem{theorem}{Theorem}

\begin{document}
\fi %skip

\newcommand\nae{\mathrm{NAE}}
\newcommand\sat{\mathrm{SAT}}
\newcommand\sharpP{\sharp\bP}

\begin{figure}
  \centering
  \subfloat[Clause gadget]%
  {\includegraphics[scale=.7]{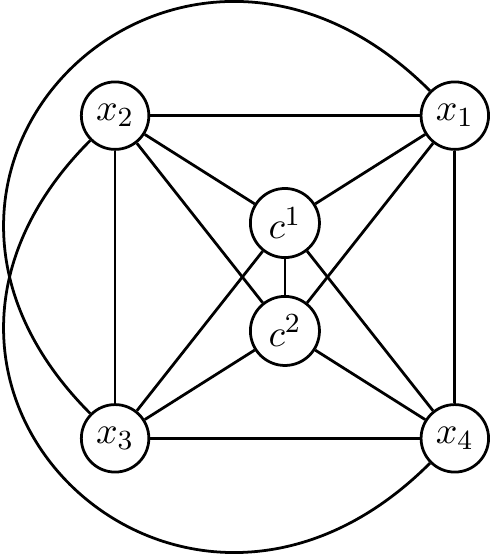}\label{fig:clause-gadget}}
  % \qquad
  %\hspace*{2cm}

\centering
  \subfloat[Bridge]%
  {\includegraphics[scale=.7]{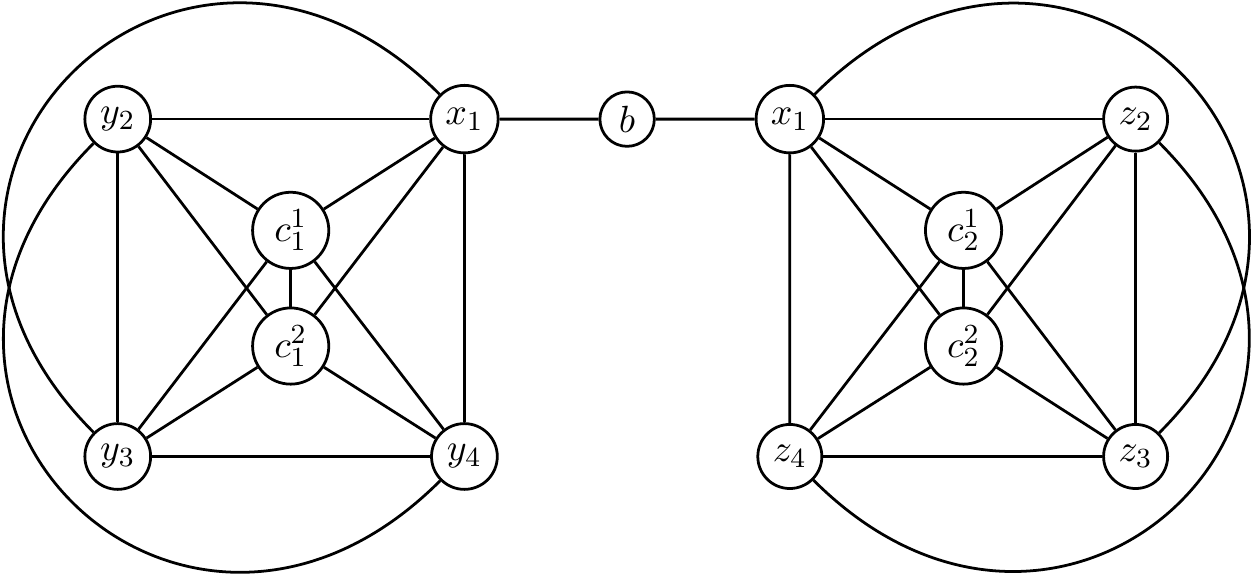}\label{fig:bridge-gadet}}
  \caption[Gadgets]{Clause gadget~\subref{fig:clause-gadget} for a
    clause $c = \nae_4(x_1, x_2, x_3, x_4)$ and a
    bridge~\subref{fig:bridge-gadet} connecting two vertices labeled
    by the same variable~$x_1$ in two different clauses
    $c_1 = \nae_4(x_1,y_2, y_3, y_4)$ and
    $c_2 = \nae_4(x_1,z_2, z_3, z_4)$.}
  \label{fig:gadgets}
\end{figure}

To prove 
Theorem~\ref{th:mcc_2} 
%Theorem~\ref{theorem3.22} 
we use a result due to Creignou and
Hermann~\cite{ar:CreignouHermann96}. Let $\nae_k$ be a Boolean relation of
arity~$k$ of all tuples having at least one~$0$ and at least one~$1$,
i.e. $\nae_k = \{0,1\}^k \smallsetminus \{0\cdots 0, 1 \cdots1\}$,
commonly called the not-all-equal relation. Let
$\nae_k(x_1, \dots, x_k)$ be a constraint which is satisfied only by
all tuples from the relation $\nae_k$. A $\nae_k$ formula
$\varphi = c_1 \land \cdots \land c_n$ is satisfied if each
clause~$c_i$ is not-all-equal satisfied. Given a CNF
formula~$\varphi$, the counting problem $\#\nae_k\sat$ asks for the
number of not-all-equal satisfying assignments of~$\varphi$.

\begin{theorem}[\cite{ar:CreignouHermann96}]
  $\#\nae_k\sat$ is $\sharpP$-complete for each $k \geq 3$.
\end{theorem}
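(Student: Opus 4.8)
Membership in $\sharpP$ is immediate: given a $\nae_k$-formula together with a candidate assignment one verifies in linear time that every clause contains both a $0$ and a $1$, so the counting function $\#\nae_k\sat$ lies in $\sharpP$. The content of the theorem is therefore the $\sharpP$-hardness, and the plan is to give a single reduction that works uniformly for every $k\ge 3$, from the problem of counting the satisfying assignments of a monotone $2$-CNF formula, shown $\sharpP$-complete by Valiant.

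Given a monotone $2$-CNF formula $\phi$ with clauses $(\ell\lor\ell')$ over variables $x_1,\dots,x_n$, I would build a $\nae_k$-formula $\psi$ on the variables $x_1,\dots,x_n$ together with one fresh \emph{polarity} variable $p$, by replacing each clause $(\ell\lor\ell')$ with the single constraint $\nae_k(\ell,\ell',p,p,\dots,p)$, the variable $p$ being repeated $k-2\ge 1$ times so that the arity is $k$. Two facts drive the analysis. First, the not-all-equal satisfying assignments of any $\nae_k$-formula form a set closed under the bit-flip involution $\sigma\mapsto\bar\sigma$, which is fixed-point free and exchanges the assignments having $p=0$ with those having $p=1$; hence $\#\nae_k\sat(\psi)$ equals exactly twice the number of its not-all-equal assignments with $p=0$. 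Second, once $p=0$ the tuple $(\ell,\ell',0,\dots,0)$ already contains a $0$, so the constraint holds iff this tuple is not identically $0$, i.e.\ iff $\ell\lor\ell'$; thus the not-all-equal assignments of $\psi$ with $p=0$ are precisely the satisfying assignments of $\phi$. Combining the two facts gives the exact identity
\[
  \#\nae_k\sat(\psi)\;=\;2\cdot\#\{\text{satisfying assignments of }\phi\},
\]
and since $\phi\mapsto\psi$ is polynomial-time computable this establishes the $\sharpP$-hardness of $\#\nae_k\sat$; with the membership noted above, $\#\nae_k\sat$ is $\sharpP$-complete for every $k\ge 3$.

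The step I expect to need the most care is getting the \emph{count} right rather than merely preserving satisfiability: the textbook reductions from $\sat$ to not-all-equal $\sat$ introduce per-clause auxiliary variables and perturb the number of models in a way one must then track, whereas the construction above is arranged so that the only correction factor is the global $2$ coming from the complementation symmetry — a single shared polarity variable, no other fresh variables, and exactly one new constraint per old clause. I would also verify the (standard, harmless) convention that a $\nae_k$-constraint may be applied to a tuple of variables with repetitions, which legitimises the padding $p,\dots,p$. Finally, I note in passing that the same conclusion follows abstractly from the classification of counting constraint satisfaction problems, since $\nae_k$ fails to be affine for $k\ge 3$: its cardinality $2^k-2$ is not a power of two, so $\nae_k$ cannot be a coset of a $\mathrm{GF}(2)$-linear subspace (whereas $\nae_2$ is the affine relation defined by $x_1\oplus x_2=1$, consistent with the hypothesis $k\ge 3$); the explicit reduction is nonetheless preferable here, being self-contained and exact.
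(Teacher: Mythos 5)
The paper offers no proof of this statement at all---it is imported as a black box from Creignou and Hermann, whose original hardness argument sits inside their dichotomy theorem for counting generalized satisfiability and proceeds through a chain of reductions (via implicative $2$-CNF formulas). Your proposal is therefore necessarily a different route, and it is a correct, genuinely more elementary one: a single uniform reduction from Valiant's $\sharpP$-complete monotone $\#2\sat$, using one shared polarity variable $p$ and the exact identity $\#\nae_k\sat(\psi)=2\cdot\#\sat(\phi)$. Both key steps check out: the bit-flip involution is fixed-point free, preserves $\nae$-satisfaction, and pairs the $p=0$ models with the $p=1$ models; and under $p=0$ each padded constraint degenerates to the original disjunction because $k-2\ge 1$ already supplies the required $0$. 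What your route buys is self-containedness and an exact, easily verified count; what it does not buy is parsimony---the factor $2$ makes this a counting (Turing) reduction, which is all that $\sharpP$-completeness requires and all the surrounding paper needs from this theorem (it performs its own parsimonious reduction \emph{from} $\#\nae_{t+1}\sat$, not through your construction), but the distinction is worth stating explicitly. The repeated-variable convention you flag is the only real exposure, and it is easily discharged: if constraint applications must have pairwise distinct arguments, pad instead with $k-2$ distinct global variables $p_1,\dots,p_{k-2}$; the count then becomes $(2^{k-2}-2)\cdot 2^{n}+2\cdot\#\sat(\phi)$, since any non-constant setting of the $p_i$ satisfies every constraint vacuously and the all-ones setting contributes $\#\sat(\phi)$ again by complementing the monotone clauses, so $\#\sat(\phi)$ is still recovered in polynomial time.
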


%\begin{theorem}\label{theorem3.22}
%  Computing $\chi_{mcc(t)}(G;2)$ is $\sharpP$-complete for each $t \geq 2$.
%\end{theorem}
\begin{proof}[Proof of Theorem \ref{th:mcc_2}]
  The membership in $\sharpP$ is clear, therefore we focus on the
  proof of $\sharpP$-hardness. We perform a parsimonious reduction
  from $\#\nae_{t+1}\sat$.

  Given a $\nae_{t+1}$ formula~$\varphi$, we associate with it the
  graph $G_\varphi$ in the following way. For each clause
  $c = \nae_{t+1}(x_1, \ldots, x_{t+1})$ construct the clause gadget
  (as in Figure~\ref{fig:clause-gadget} for $t=3$), consisting of a
  complete graph~$K_{2t}$. Label $t+1$ vertices of~$K_{2t}$ by the
  variables~$x_1$, \ldots, $x_{t+1}$ and the remaining $t-1$ vertices
  by~$c^1$, \ldots, $c^{t-1}$, where~$c$ is the identifier of the
  clause.

  Connect different copies of the same variable in two clause gadgets
  by a bridge with a new vertex~$b$. There is a new bridge vertex~$b$
  for each pair of variable vertices labeled by the same variable~$x$
  in different clause gadgets.  Figure~\ref{fig:bridge-gadet}
  illustrates (for $t=3$) how two copies of the variable~$x_1$ in
  clauses $c_1 = \nae_{t+1}(x_1, y_2, \ldots, y_{t+1})$ and
  $c_2 = \nae_{t+1}(x_1, z_2, \ldots, z_{t+1})$ are connected through
  a bridge with a new node~$b$.

  We show that each $mcc_t$-coloring of the graph~$G_\varphi$ encodes
  a $\nae_{t+1}$-satisfiability of the formula~$\varphi$. A valid
  $mcc_t$ $2$-coloring of each clause gadget forces $t$~variables to
  be colored by the color~$0$ and the $t$~others by the
  color~$1$. Restricted to the variable nodes~$x_1$, \ldots, $x_{t+1}$
  of the clause gadget, this represents a correct assignment of the
  constraint $\nae_{t+1}(x_1, \ldots, x_{t+1})$. No other $2$-coloring
  of the clause gadget is a valid $mcc_t$ coloring.

  When two occurrences of the same variable~$x$ are connected through
  a bridge with a vertex~$b$, this bridge vertex~$b$ must be colored
  by a different color than the vertices labeled by~$x$. Indeed, in a
  valid $mcc_t$ coloring, the vertex~$x$ in the clause gadget is
  connected to another $t-1$ vertices colored by the same color. This
  induces a different color for the vertex~$b$, otherwise there would
  be a connected component containing $t+1$ vertices and colored by
  the same color. This also forces the two copies of the variable~$x$,
  connected by a bridge, to be colored by the same color.

  Hence, the satisfying assignments of a $\nae_{t+1}\sat$
  formula~$\varphi$ are in one-to-one correspondence to the $mcc_t$
  $2$-colorings of~$G_\varphi$, which constitutes a parsimonious
  reduction from $\#\nae_{t+1}\sat$.
\end{proof}

%\bibliographystyle{alpha}
%\bibliography{central}

%\end{document}

Next we determine the complexity of $\chi_{mcc_t}(G; k)$ for $k,t \in \N$.
\begin{theorem}
\label{th:mcc-new}
For any integers $t$ and $k$ that are both at least two, $\chi_{mcc_t}(G; k)$ is $\#$\textbf{P}-complete.
\end{theorem}

\begin{proof}
Membership in $\sharp\bP$ is clear. We shall show that if $t$ and $k$ are integers and both at least two, then
$\chi_{mcc_t}(G; k)$ is polynomial time reducible to $\chi_{mcc_t}(G; k+1)$. 
Combining this with Theorem \ref{th:mcc_2}
%~3.22 
gives the result.

Given a graph $G$, form $G'$ from $G$ by adding a clique on $t(k+1)$ new vertices and joining one of the new vertices to every vertex of $G$. In any valid coloring of $G'$ with $k+1$ colors, each color class must contain precisely $t$ of the new vertices and these vertices can be colored in $\binom{(k+1)t}{t,t,\ldots,t}$ ways, where $\binom{(k+1)t}{t,t,\ldots,t}$ is the multinomial coefficient counting the number of ways of choosing an ordered collection of $k+1$ subsets each of size $t$ from a set of size $(k+1)t$.
Once the new vertices have been colored there are $k$ colors available to color the vertices of $G$. Thus
\[ \chi_{mcc_t}(G'; k+1) = \binom{(k+1)t}{t,t,\ldots,t}\chi_{mcc_t}(G; k).\]
The result follows.
\end{proof}

\begin{remark}
The statement
\[ \chi_{mcc_t}(G'; k+1) = \binom{(k+1)t}{t,t,\ldots,t}\chi_{mcc_t}(G; k).\]
appears at first sight to be an instance of
Linial's Trick, but there is a subtle difference.
In the proof above the choice of $G'= f(G,t,k)$ both depends on $t$ and $k$.

In applying Linial's Trick we require
that the graph $G' =g(G)$ does not depend on $k$
in order to get a polynomial identity of the form
\[ \chi_{mcc_t}(g^n(G); k+n) = f(k,n,t) \cdot \chi_{mcc_t}(G; k).\]

Currently we do not know whether
there is a version of Linial's Trick which can be used
for $\chi_{mcc_t}(G; X)$.
\end{remark}

Using the fact that $\chi_{mcc_1}(G; X)$ is the chromatic polynomial and the previous discussion
(Proposition \ref{prop:mcc-basic}, and Theorems \ref{th:mcc_2} and \ref{th:mcc-new})
we get

\begin{corollary}
\label{cor:mcc}
Evaluating $\chi_{mcc_t}(G; X)$ is in $\bP$ for $t=1$ and $k=0,1,2$, and for $t \geq 2$ and $k=1$.
For all other values of $t,k \in \N$ evaluation is $\sharp\bP$-complete.
\end{corollary}
\begin{problem}

\label{prob-8}
What is the complexity spectrum for $\chi_{mcc_t}(G;a)$ for $t \geq 2$ and $a \in \F - \N$?
\end{problem}

In Theorem \ref{th:cliquealpha} we have determined completely the complexity spectrum for $\chi_{DU(K_{\alpha})}(G;X)$.
This was meant to be a warm-up exercise for determining the complexity spectrum for $\chi_{mcc_t}(G; X)$,
as each $DU(K_{\alpha})$-coloring is also an $mcc_{\alpha}$-coloring. 
However, 
%it turned out that 
determining the difficulty of evaluating $\chi_{mcc_t}(G; X)$ seems to be much more demanding.

%------------------------------------
\ifrevision 
\begin{framed}
File: REV-hfree.tex
\\
{\color{red} Last revised: Novemebr 17, 2016}
\end{framed}
\else \fi %revision
\subsection{$H$-free-colorings}
\label{se:hfree}

A function $f:V(G)\to[k]$ is an $H$-free coloring
if no color class induces a graph isomorphic to $H$. 
Clearly this is a $\Zilber$-property, hence
$\chi_{H-free}(G;k)$ is a polynomial in $k$.
The discrete complexity spectrum is rather well understood:
%This was studied in \cite{achlioptas1998existence} and \cite{ar:Achlioptas1997}.

\begin{theorem}
\label{th:h-free}
\begin{enumerate}[(i)]
\item
(\cite{achlioptas1998existence})
$\chi_{H-free}(G;k)$
is $\sharp\bP$-hard for every $k\geq3$ and $H$ with at least $2$ vertices. 
\item
(\cite{ar:Achlioptas1997}) 
$\chi_{H-free}(G;2)$
is $\bNP$-hard for every $H$ with at most $2$ vertices. 
\end{enumerate}
\end{theorem}

It is easy to see that for $X=0,1$ 
we have the following evaluations:
$$\chi_{H-free}(G;0) = 0$$ and
$$\chi_{H-free}(G;1) =  \begin{cases}
1 & \mbox{if   } G \mbox{ is  } H-\mbox{free} \\
0 & \mbox{ otherwise }.
\end{cases}
$$

\begin{problem}
\label{prob-9}
\label{p:hfree}
What is the complexity of evaluation of $\chi_{H-free}(G;a)$ for $a \in \F - \N$?
\end{problem}

$H$-free coloring is another case
where Linial's Trick does not seem to work.

%------------------------------------
%\newpage
%------------------------------------
\ifrevision 
\begin{framed}
\noindent
File: KO-varia.tex
\\
{\color{red} Last revised and expanded: November 22, 2016}
\end{framed}
\else \fi %revision
\section{More graph polynomials}
\label{se:varia}

In this section we discuss some graph polynomials
for which we have only partial knowledge, if any, about the complexity spectrum.
The graph polynomials $\chi_\Phi(G;k)$ we discuss arise from $\Zilber$-properties $\Phi$
defined in Section~\ref{subse:more}. 
Three of them, (i-iii), belong to the framework of $\mathcal{P}_1-\mathcal{P}_2$-colorings
from Subsection \ref{subse:P-colorings}, listed in Table~\ref{table-zilber},
and the remaining two, (iv,v), are mentioned here because they have a rich literature.

In each case we do not know --- but suspect --- that evaluation  of $\chi_\Phi(G;X)$ is $\sharp\bP$-hard
for $X=a$ for at least one $a \in \F$. 
\begin{problem}
\label{prob-10}
Determine the full complexity spectrum of $\chi_\Phi(G;X)$ for each of the graph polynomials in Section~\ref{subse:more}.
\end{problem}

Instead of counting colorings one can look at the corresponding decision problem which asks whether $\chi_\Phi(G;k) > 0$.
Clearly, the counting problem is at least as hard as this decision problem.
For each of the graph properties $\Phi$ above we do know that computing the polynomial $\chi_\Phi(G;X)$ is $\bNP$-hard. 
Furthermore, in two cases we discuss dichotomy theorems showing that 
an evaluation of $\chi_{\Phi}(G;X)$ is either in $\bP$ or $\bNP$-hard.

\subsection{Graph polynomials with incomplete complexity spectrum}
\label{subse:more}
We consider the following colorings:
\begin{enumerate}[(i)]
\item
Let $t\in\mathbb{N}$. A function $c:V(G)\rightarrow [k]$
is a {\em $t$-improper coloring} if every color induces a graph of maximal
degree $t$. 
$t$-improper colorings were studied in \cite{ar:CowenGoddardJesurum1997}.
They originate in certain network problems.

\item 
A function $c:V(G)\rightarrow [k]$ is an {\em acyclic coloring}
if it is proper and there is no two colored cycle in $G$. 
Acyclic colorings were introduced in \cite{ar:Grunbaum1973} and further studied in \cite{ar:AlonMcDiarmidReed1991}.

\item 
A function $f:V(G)\rightarrow [k]$ is a {\em co-coloring} if
every color class induces a graph which is either a clique or an independent
set. Co-colorings were first studied in \cite{ar:LesniakStraight77}. 
\item 
A function $f:E(G)\rightarrow [k]$ is a rainbow-path coloring (rainbow coloring for short)
if every two vertices are connected by a path in which every two edges
are colored differently. Rainbow colorings were first introduced in \cite{ar:Chartrand2008}. 
\item 
A function $f:V(G)\to[k]$ is an injective coloring
if it is injective on the open neighborhood of every vertex. 
$f$ does not have to be a proper coloring.
In other words,
if there is a path of length $2$ between $v$ and $u$ then $u$ and $v$ must
have different colors. 
%Equivalently, it is a proper coloring of $G^{2}$,
%the graph of $A_{G}^{2}$, where $A_{G}$ is the adjacency matrix
%of $G$. 
\end{enumerate}
Let $\Phi_{t-imp}$, $\Phi_{acyc}$, $\Phi_{co-co}$, $\Phi_{rainbow}$, and $\Phi_{inject}$
denote respectively the coloring properties of $t$-improper colorings for fixed $t \in \mathbb{N}$, 
acyclic colorings, co-colorings, rainbow colorings, and injective colorings. 
Clearly, each of these coloring properties $\Phi$ is a $\Zilber$-property, and hence
the number of colorings in each $\Phi$ is a polynomial in $k$. 
We denote by $\chi_{t-imp}(G;k)$, $\chi_{acyc}(G;k)$, $\chi_{co-co}(G;k)$, $\chi_{rainbow}(G;k)$, and $\chi_{inject}(G;k)$
the graph polynomials $\chi_{\Phi}(G;k)$ counting colorings with at most $k$ colors of a graph $G$ in the corresponding $\Phi$.

For acyclic colorings, co-colorings, rainbow colorings and injective colorings, 
the complexity spectrum is completely unknown. 
For $t$-improper colorings, partial results are known. 
For $t=0$, the $t$-improper colorings are exactly the proper colorings, hence
$\chi_{0-imp}(G;k)= \chi(G;k)$
and the complexity spectrum is completely understood.
For $t=1$ and $k \in \N$ we have 
$\chi_{1-imp}(G;k)= \chi_{mcc_2}(G;k)$
and the complexity spectrum is completely understood.
For $t=2$ every color class consists of a disjoint union of 
paths and cycles.
%$P_n$'s and $C_n$'s.
This is the first case where the complexity spectrum of $\chi_{2-imp}(G;k)$ 
is not known.

\subsection{$\bNP$-hardness}

From the literature, we have that each of the graph polynomials defined in Section~\ref{subse:more}
is $\bNP$-hard: 
\begin{theorem}
\label{th:more-NP-hard}
\begin{enumerate}[(i)]
 \item 
Let $t\in \mathbb{N}$. 
Let $\Phi$ be one of $\Phi_{t-imp}$ $\Phi_{co-co}$, $\Phi_{rainbow}$, and $\Phi_{inject}$. 
Computing the minimal $k \in \N$  such that $\chi_{\Phi}(G;k)>0$ is $\bNP$-hard. 
\item 
It is NP-hard to
decide for a given $G$ and $k$ if the acyclic chromatic number of
$G$ is at most $k$. 
\end{enumerate}
\end{theorem}
\begin{proof}
The case of $t$-improper colorings in Theorem~\ref{th:more-NP-hard}(i) follows directly from \cite{ar:CowenGoddardJesurum1997}. 
The case of co-colorings is proven in~\cite{ar:GimbelKratschStewart1994}.
The case of rainbow colorings is proven in~\cite{ar:ChakrabortyFischerMatsliahYuster2008}. 
The case of injective colorings follows directly from~\cite{ar:HahnKratochvilSiranSotteau2002}. 
Theorem~\ref{th:more-NP-hard}(ii) is proven in~\cite{phd:Kostochka}. 
\end{proof}

For $t$-improper colorings we are able to give a dichotomy theorem for graphs with multiple edges:

\ifrevised
\begin{proposition}
\label{prop:t-imp}
\begin{enumerate}[(i)]
\item
For every $a \in \F - \{0,1\}$ the problem of evaluating $\chi_{t-imp}(G;a)$,
where the input runs over the class of graphs with multiple edges allowed, is $\bNP$-hard.
\item
The evaluations $\chi_{t-imp}(G;0)$ and $\chi_{t-imp}(G;1)$ are in $\bP$.
\end{enumerate}
\end{proposition}
\else
\begin{proposition}
\label{prop:t-imp}
Let $G$ be a graph with possibly multiple edges.
\begin{enumerate}[(i)]
\item
Evaluation of $\chi_{t-imp}(G;x_{0})$ is $NP$-hard for every $X=a$ with
$a \in\F -\{0,1\}$.
\item
Trivially, evaluation for $X=0,1$ is in $\bP$.
\end{enumerate}
\end{proposition}
\fi %revised
\begin{proof}
Let $G\bowtie_t K_{1}$ be the graph obtained from $G$ by adding
a new vertex $v$ and putting $t+1$ edges between $v$ and any vertex
of $G$. Clearly, $v$ cannot be colored the same color as any other
vertex of $G$. Therefore, we can use Linial's Trick. 
\end{proof}

For acyclic colorings, we have a partial dichotomy.
The evaluations of $\chi_{acyc}(G;X)$ with $X=0$ and $X=1$ are trivial by definition, because every proper coloring
with less than two colors is acyclic.

\begin{proposition}
\label{pr:acyc}
For every $a \notin\mathbb{N}$, it is NP-hard to compute $\chi_{acyc}(G; a)$. 
\end{proposition}
\begin{proof}
We use a version of Linial's Trick for the chromatic polynomial.
Consider $G\bowtie K_{1}$. Every acyclic coloring $f$ of $G\bowtie K_{1}$
with color set $[k]$ must color the vertex $K_{1}$ with a unique
color. The coloring $f\mid_{G}$ induced by $f$ on $G$ is clearly
proper and does not contain any two-colored cycles, since $f$ is
acyclic. 
On the other hand, every acyclic coloring $g$  of $G$ with color
set $[k-1]$ can be transformed to an acyclic coloring  $f$ of $G\bowtie K_{1}$
with color set $[k]$ by coloring $K_{1}$ with any of the $k$ colors,
and then setting 
\[
f(v)=\begin{cases}
g(v) & g(v)<f(K_{1})\\
g(v)+1 & g(v)\geq f(K_{1})
\end{cases}
\]
 for any $v\in V(G)$. Hence we get 
\[
\chi_{acyc}(G\bowtie K_{1};x_{0})=\chi_{acyc}(G;x_{0}-1)\cdot x_{0}\,.
\]
%which proves the proposition.
Using Theorem \ref{th:more-NP-hard}(ii) we complete the proof.
\end{proof}
A solution of the following problem would complete the dichotomy:
\begin{problem}
\label{prob-11}
What is the complexity of the evaluation $\chi_{acyc}(G;a)$ for $a \in \N -\{0,1\}$?
\end{problem}

\ifskip
\else
\begin{framed}
The following table needs completions, if possible.
\end{framed}

\subsection*{Summary}

\small
\begin{center}
\begin{table}[htb]
\label{table-varia}
\begin{tabular}{|l | l| l| l | l|}
\hline
%\vspace{0.1cm}
&&&& \\
G-polynomial & $E=\mathrm{EASY}(P)$ & $\mathrm{\bNP HARD}(P)$ & $\mathrm{OTHER}$ & Reference\\
&&&&\\
\hline
\vspace{0.05cm}
$\chi_{t-imp}(G;X)$ & $E_{t-imp}= \{0,1\}$ & $\F- E_{t-imp}$ & $\emptyset$ & Theorem \ref{prop:t-imp}\\
\hline
\vspace{0.05cm}
$\chi_{acyc}(G;X)$ & $ \{0,1\} \subseteq E_{acyc}$ & contains $\F- E_{acyc}$ & ??? & Theorem \ref{pr:acyc}\\
\hline
\vspace{0.05cm}
$\chi_{co-co}(G;X)$ & ???  & ??? & ??? & ???\\
\hline
\vspace{0.05cm}
$\chi_{rainbow}(G;X)$ & ???  & ??? & ??? & ???\\
\hline
\vspace{0.05cm}
$\chi_{injec}(G;X)$ & ???  & ??? & ??? & ???\\
\hline
\end{tabular}
\caption{Incomplete complexity spectra}
\label{table-incomplete}
\end{table}
\end{center}
\fi

%------------------------------------
\ifrevision
\begin{framed}
\noindent
File: REV-conclu.tex
\\
{\color{red} New. November 22, 2016}
\end{framed}
\else \fi %revision
\section{Conclusions and open problems}
\label{se:conclu}
In the light of the discovery of many univariate graph polynomials
in \cite{ar:KotekMakowskyZilber11}, and the fact that infinitely many
of them
are incomparable in expressive power, Theorem \ref{th:dp},
we initiated the systematic study of 
these graph polynomials.
Inspired by N. Linial's work in \cite{ar:Linial86} we have concentrated
in this paper on
the complexity of evaluating these graph polynomials.
We have introduced the full and the discrete complexity spectrum of 
univariate graph polynomials.
In this paper
we concentrated our attention on graph polynomials arising from graph colorings
previously studied in the literature.

Throughout the paper we have listed\footnote{
These are Problems 
\ref{prob-1},
%\ref{prob-2},
\ref{prob-3},
\ref{prob-4},
\ref{prob-5},
\ref{prob-6},
%\ref{prob-7},
\ref{prob-8},
\ref{prob-9},
\ref{prob-10} and
\ref{prob-11}.
%1.5, 2.4, 3.4, 3.25, 5.1 and 5.5.
} open problems we encountered in our
explorations.  They mostly ask for the complete determination of the full complexity
spectrum for specific graph polynomials.
Among the more interesting challenges we have the following:

\begin{problem}
Determine the  full complexity spectrum of
\begin{enumerate}[(i)]
\item
the edge chromatic polynomial
$\chi_{edge}(G;X)$; 
\item
the polynomial
$\chi_{mcc_t}(G;X)$ for $t\geq 2$;
\item
the polynomials
$\chi_{H-free}(G;X)$ for $t\geq 2$;
\item
the generalized chromatic polynomials 
derived from $t$-improper colorings, acyclic colorings, co-colorings, rainbow-path colorings
and injective colorings
of Section \ref{se:varia}.
\end{enumerate}
\end{problem}

We have not found a single graph polynomial which does not
have a complexity spectrum satisfying the Difficult Point Dichotomy.
Our results so far suggest that there might be  {\em meta-theorem} to be
formulated, and finally also to be proven, which says that for a large
class of graph polynomials Difficult Point Dichotomy holds.
The large class in question should be defined by some definability criterion.
Examples of such criteria could come from descriptive complexity theory.

Here are some candidates for logically defined classes\footnote{
For a discussion of logically defined classes of graph polynomials, \\ see
\cite{ar:MakowskyZoo,pr:MakowskyKotekRavve2013,ar:KotekMakowskyZilber11,phd:Kotek}.
}
 of univariate graph polynomials
\begin{enumerate}[(i)]
\item
All $\SOL$-definable graph polynomials.
\item
All $\MSOL$-definable graph polynomials.
\item
All $\mathcal{P}$-chromatic polynomials where $\mathcal{P}$
is in $\bNP$, or equivalently, where $\mathcal{P}$ is definable in
$\exists\SOL$, the existential fragment of $\SOL$.
\end{enumerate}

Previous experience suggests that it is too early to formulate a solid conjecture.
\begin{problem}
Formulate and prove a meta-theorem for the Difficult Point Dichotomy.
\end{problem}

We hope the search for a meta-theorem will spawn further research
and will lead to new insights both in graph theory and in descriptive complexity theory.

\subsection*{Acknowledgements}
We would like to thank two anonymous referees of an earlier version of this paper
for their accurate reading  and spotting gaps and imprecisions, and for their extremely valuable comments.
We would also like to thank the editors of this special issue, J. Ellis-Monaghan, J. Kung and I. Moffatt
for their patience and support.

%------------------------------------------------
\section*{References}
%\input{bibfile}

%-----------------------------
%\biboptions{longnamesfirst,angle,semicolon}
%\bibliography{new}
%\bibliographystyle{plain}
\end{document}